\newtheorem{theorem}{Theorem}
\newtheorem{corollary}{Corollary}
\newlist{thmlist}{enumerate}{1}
\setlist[thmlist]{label=$\arabic{thmlisti}$.  ,noitemsep}
\definecolor{R}{HTML}{FF0000}%C41E33
\definecolor{G}{HTML}{3DF53D}%00B838
\definecolor{B}{HTML}{2255EE}%0051BA
\definecolor{Y}{HTML}{FFFF00}%
\colorlet{X}{black!30}% grey
\colorlet{Z}{black!70}% grey
\colorlet{Q}{black!90}%
\colorlet{O}{orange}%
\colorlet{W}{white}%
\newcommand{\cube}[1]{
    \dataheight=#1
    \foreach \i in {1,...,#1} {
        \foreach \j in {1,...,#1} {
            \checkFront(\i,\j)
            \draw[line join=round,line cap=round,ultra thick,fill=\cachedata]%
            (\i-1,#1-\j) -- (\i-1,#1+1-\j) -- (\i,#1+1-\j) -- (\i,#1-\j)  -- cycle;
        }
    }
    \foreach \i in {1,...,#1} {
        \foreach \j in {1,...,#1} {
            \checkRight(\i,\j)
            \draw[line join=round,line cap=round,ultra thick,fill=\cachedata]%
            (#1+\i/3-1/3,#1-\j+\i/3-1/3) -- (#1+\i/3-1/3,#1+1-\j+\i/3-1/3) -- (#1+\i/3,#1+1-\j+\i/3) -- (#1+\i/3,#1-\j+\i/3)  -- cycle;
        }
    }
    \foreach \i in {1,...,#1} {
        \foreach \j in {1,...,#1} {
            \checkTop(\i,\j)
            \draw[line join=round,line cap=round,ultra thick,fill=\cachedata]%
            (\i-1+#1/3-\j/3,#1+#1/3-\j/3) -- (\i-1+#1/3-\j/3+1/3,#1+#1/3-\j/3+1/3) -- (\i-1+#1/3-\j/3+1/3+1,#1+#1/3-\j/3+1/3) -- (\i-1+#1/3-\j/3+1,#1+#1/3-\j/3)  -- cycle;
        }
    }
}
\title{$n \times n \times n$ Rubik's Cubes and God's Number \\ \Large A Group Theoretical Analysis}
\author{Daniel Salkinder}
\date{}
\begin{document}

\maketitle

\begin{abstract}
    The Rubik's Cube is the most popular puzzle in the world. Two of its studied aspects are God's Number, the minimum number of turns necessary to solve any state, and the first law of cubology, a solvability criterion. We modify previous statements of the first law of cubology for $n \times n \times n$ Rubik's Cubes, and prove necessary and sufficient solvability conditions. We compute the order of the Rubik's Cube group and the number of distinct configurations of the $n \times n \times n$ Rubik's Cube. Finally, we derive a lower bound for God's Number using the group theoretical results and a counting argument.
\end{abstract}

\section{Introduction} \label{sec:introduction}

The Rubik's Cube is the most famous and best-selling puzzle of all time, challenging generations of solvers since its invention by Erno Rubik in 1974. The original design is a $3 \times 3 \times 3$ cube where every face can rotate around its center. To solve the cube, one must rotate the faces to rearrange the cube into its solved state: where all faces are monochromatic. In 1981, Peter Sebesteny invented an extension of the Rubik's Cube to a $4 \times 4 \times 4$ cube where all slices could rotate. Only a couple of years later, a $5 \times 5 \times 5$ cube was created. Today, there are many variations of the Rubik's Cube, and cubes up to $17 \times 17 \times 17$ are widely sold. Additionally, there are variants designed like other platonic solids (Pyraminx, Megaminx, etc.), cubes that rotate on axes different than those of the Rubik's Cube (Skewb, Square One, etc.), and many more. This paper will focus on the $n \times n \times n$ Rubik's Cube.

The Rubik's Cube has garnered significant attention from throughout the scientific community. Many mathematicians have studied the cube, giving group theoretical analyses and solutions for certain cases \cite{bonzio_loi_peruzzi_2017, bonzio_loi_peruzzi_2018, sikiric_2020}. Another well-studied facet of the Rubik's Cube is God's Number: the minimum number of face turns needed to solve the puzzle from any position \cite{rokicki_2014, rokicki_kociemba_davidson_dethridge_2014, kunkle_cooperman_2007, weed_2016, demaine_eisenstat_rudoy_2017}. Additionally, the Rubik's Cube can act as a physical model of chaotic behaviour, which has led to applications in cryptography \cite{diaconu_loukhaoukha_2013, volte_patarin_nachef_2013} as well as physics \cite{czech_larjo_rozali_2011, lai_shi_pu_zhang_zhang_yu_li_2020, lee_huang_2008}.

With many speed Rubik's Cube solving competitions around the world, the puzzle begs the question, "What is the minimum number of moves necessary to solve the cube from any configuration?". Such an optimal algorithm has been deemed impossible for humans to find, and hence has been labeled \textit{God's Algorithm}, with the number of moves in the worst case being labeled \textit{God's Number}. Computer scientists like Rokicki \cite{rokicki_kociemba_davidson_dethridge_2014} have used symmetry and sheer computational power to determine that this number is 26 for the original Rubik's Cube, but this approach requires too much time to be applicable to general cubes. Others like Demaine \cite{demaine_demaine_eisenstat_lubiw_winslow_2011} have found the growth rate of God's Number for $n \times n \times n$ cubes.

We aim to approach the problem of finding God's Number from a group theoretical perspective. To employ such a strategy, we first need to be able to count the number of configurations of an $n \times n \times n$ Rubik's Cube. We do so with a criterion for solving the question "under which conditions is a (scrambled) cube solvable?", the first law of cubology. First discussed by Bandelow \cite{bandelow_1982} for the original Rubik's Cube in 1982, the first law of cubology has been attempted to be generalized by Bonzio et al. \cite{bonzio_loi_peruzzi_2018} in 2018, but the result of that paper proves insufficient to generate only valid configurations. 

The present work presents a modified first law of cubology for the $n \times n \times n$ Rubik's Cube. This is then used to find the number of distinct valid configurations. Finally, a counting and pigeonhole principle argument establishes the lower bound of God's Number as $\Omega(n^2/\log n)$, which is in accordance with \cite{demaine_demaine_eisenstat_lubiw_winslow_2011}, and is generalizable to find better coefficients.

The paper is structured as follows: Section \ref{sec:definitions} provides definitions and mathematical notations used throughout the paper. Section \ref{sec:group} discusses the $n \times n \times n$ Rubik's Cube group. Subsections \ref{subsec:sconf_subsets} and \ref{subsec:gn_subgroups} prove the necessary and sufficient conditions of the first law of cubology by analysing chosen subgroups. Subsection \ref{subsec:g_order} finds the order of the $n \times n \times n$ Rubik's Cube group. Section \ref{sec:numbers} gives a lower bound for God's Number. The conclusion suggests potential future research.

\section{Definitions} \label{sec:definitions}

The $n \times n \times n$ Rubik's Cube is an extension of the original $3 \times 3 \times 3$ Rubik's Cube. It takes the form of an $n \times n \times n$ cube made out of $1 \times 1 \times 1$ pieces, and each $n \times n \times 1$ slice is free to rotate around its center. As per standard convention, the $1 \times 1 \times 1$ cubes along the outside of a Rubik's Cube are called \textit{cubies} and the colored $1 \times 1$ faces of the cubies are called \textit{stickers}. We proceed to categorize the cubies as shown in Figure \ref{fig:cubie_types}:

\begin{figure}[h]
    \centering
    \begin{subfigure}[b]{.4\textwidth}
        \centering
        \begin{tikzpicture}[scale=.5]
            \readarray{Front}{%
                R&G&G&X&G&G&R&%
                G&B&O&O&O&B&G&%
                G&O&B&O&B&O&G&%
                X&O&O&Z&O&O&X&%
                G&O&B&O&B&O&G&%
                G&B&O&O&O&B&G&%
                R&G&G&X&G&G&R&%
            }
            \readarray{Right}{%
                R&G&G&X&G&G&R&%
                G&B&O&O&O&B&G&%
                G&O&B&O&B&O&G&%
                X&O&O&Z&O&O&X&%
                G&O&B&O&B&O&G&%
                G&B&O&O&O&B&G&%
                R&G&G&X&G&G&R&%
            }
            \readarray{Top}{%
                R&G&G&X&G&G&R&%
                G&B&O&O&O&B&G&%
                G&O&B&O&B&O&G&%
                X&O&O&Z&O&O&X&%
                G&O&B&O&B&O&G&%
                G&B&O&O&O&B&G&%
                R&G&G&X&G&G&R&%
            }
            \cube{7}
        \end{tikzpicture}\\
        \caption{Cubie types for a $7 \times 7 \times 7$ Rubik's Cube}
        \label{fig:cubie_types_odd}
    \end{subfigure}
    \hspace{1 cm}
    \begin{subfigure}[b]{.4\textwidth}
        \centering
        \begin{tikzpicture}[scale=.5]
            \readarray{Front}{%
                R&G&G&G&G&R&%
                G&B&O&O&B&G&%
                G&O&B&B&O&G&%
                G&O&B&B&O&G&%
                G&B&O&O&B&G&%
                R&G&G&G&G&R&%
            }
            \readarray{Right}{%
                R&G&G&G&G&R&%
                G&B&O&O&B&G&%
                G&O&B&B&O&G&%
                G&O&B&B&O&G&%
                G&B&O&O&B&G&%
                R&G&G&G&G&R&%
            }
            \readarray{Top}{%
                R&G&G&G&G&R&%
                G&B&O&O&B&G&%
                G&O&B&B&O&G&%
                G&O&B&B&O&G&%
                G&B&O&O&B&G&%
                R&G&G&G&G&R&%
            }
            \cube{6}
        \end{tikzpicture}\\
        \caption{Cubie types for a $6 \times 6 \times 6$ Rubik's Cube}
        \label{fig:cubie_types_even}
    \end{subfigure}
    \caption{Cubie types for odd and even $n$: Corners (Red), Single Edges (Grey), Coupled Edges (Green), Center Corners (Blue), Center Edges (Orange), and Fixed Centers (Dark Grey)}
    \label{fig:cubie_types}
\end{figure}
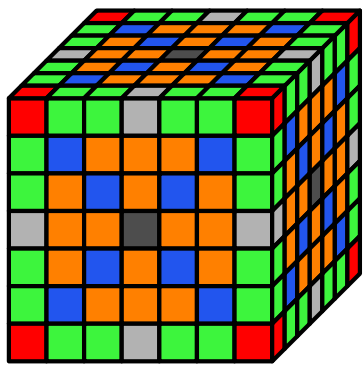
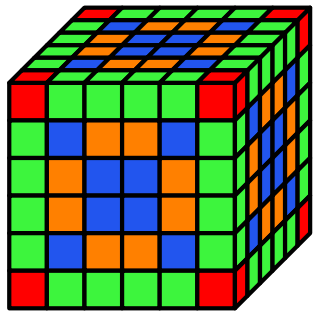

Cubies with 3 stickers are denoted \textit{corners}, with 2 stickers \textit{edges}, and with 1 sticker \textit{centers}. Centers can be split into \textit{fixed centers} (Dark Grey in Figure \ref{fig:cubie_types}), \textit{center corners} (Blue in Figure \ref{fig:cubie_types}), and \textit{center edges} (Orange in Figure \ref{fig:cubie_types}). Fixed centers are at the middle of their corresponding face and do not move when that face is rotated. We note that fixed centers only exist for odd cubes, as seen in Figure \ref{fig:cubie_types_odd} but not Figure \ref{fig:cubie_types_even}. Center corners are along the diagonals connecting the corner cubies of their corresponding face, and all remaining center cubies are center edges. Edges can be categorized as \textit{single edges} (Grey in Figure \ref{fig:cubie_types}) and \textit{coupled edges} (Green in Figure \ref{fig:cubie_types}). Single edges are at the middle of their corresponding edges and are in the same slice as fixed centers. Single edges also only exist for odd cubes, as seen in Figure \ref{fig:cubie_types_odd} but not Figure \ref{fig:cubie_types_even}. All remaining edges are called coupled due to the existence of two edges with the same sticker colors in each orbit of these edges. This paper will refer to the permutations of stickers on one cubie as \textit{orientations} of that cubie, and permutations of similar cubies \textit{permutations}.

We can also notate all possible moves on an $n \times n \times n$ Rubik's Cube. Like on a $3 \times 3 \times 3$ Rubik's Cube, the capital letters \textit{F, B, R, L, U, D} denote single clockwise $90^\circ$ turns of the Front, Back, Right, Left, Up, and Down faces, respectively. We call these moves \textit{face turns}. To label rotations of all possible slices, $F_k, B_k, R_k, L_k, U_k, D_k$, $1 \leq k \leq \left\lfloor\frac{n}{2}\right\rfloor$ denote single clockwise $90^\circ$ rotations of the $k^\text{th}$ slice from the corresponding face, as shown in Figure \ref{fig:move_notation}. For convenience, we omit the $1$ when writing \textit{F, B, R, L, U, D}. When $k>1$, we call these moves \textit{internal slice rotations}. Any sequence of these slice rotations is a valid move. \textit{$m^{-1}$} denotes the inverse of the move \textit{m} (a single counterclockwise $90^\circ$ rotation for slice rotations $m$). \textit{$m^2$} denotes doing move $m$ twice (a $180^\circ$ rotation of the slice moved by $m$ for slice rotations). We call the sequence of moves $m n m^{-1} n^{-1}$ a commutator, denoted $[m,n]$. Similarly, we call the sequence of moves $m n m^{-1}$ a conjugate, denoted $[m:n]$.

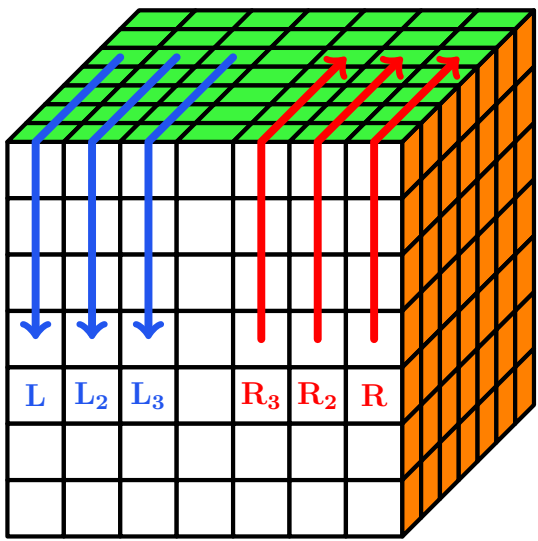
\begin{figure}[h]
    \centering
    \begin{tikzpicture}[scale=.75]
        \readarray{Front}{%
            W&W&W&W&W&W&W&%
            W&W&W&W&W&W&W&%
            W&W&W&W&W&W&W&%
            W&W&W&W&W&W&W&%
            W&W&W&W&W&W&W&%
            W&W&W&W&W&W&W&%
            W&W&W&W&W&W&W&%
        }
        \readarray{Right}{%
            O&O&O&O&O&O&O&%
            O&O&O&O&O&O&O&%
            O&O&O&O&O&O&O&%
            O&O&O&O&O&O&O&%
            O&O&O&O&O&O&O&%
            O&O&O&O&O&O&O&%
            O&O&O&O&O&O&O&%
        }
        \readarray{Top}{%
            G&G&G&G&G&G&G&%
            G&G&G&G&G&G&G&%
            G&G&G&G&G&G&G&%
            G&G&G&G&G&G&G&%
            G&G&G&G&G&G&G&%
            G&G&G&G&G&G&G&%
            G&G&G&G&G&G&G&%
        }
        \cube{7}
        \draw[->,line join=round,line cap=round,line width=3pt,color=R]%
        (6.5,3.5) -- (6.5, 7) -> (8, 8.5);
        \node [color=R] at (6.5,2.5) {$\mathbf{R}$};
        \draw[->,line join=round,line cap=round,line width=3pt,color=R]%
        (5.5,3.5) -- (5.5, 7) -> (7, 8.5);
        \node [color=R] at (5.5,2.5) {$\mathbf{R_2}$};
        \draw[->,line join=round,line cap=round,line width=3pt,color=R]%
        (4.5,3.5) -- (4.5, 7) -> (6, 8.5);
        \node [color=R] at (4.5,2.5) {$\mathbf{R_3}$};
        \draw[->,line join=round,line cap=round,line width=3pt,color=B]%
        (4, 8.5) -- (2.5, 7) -> (2.5,3.5);
        \node [color=B] at (2.5,2.5) {$\mathbf{L_3}$};
        \draw[->,line join=round,line cap=round,line width=3pt,color=B]%
        (3, 8.5) -- (1.5, 7) -> (1.5,3.5);
        \node [color=B] at (1.5,2.5) {$\mathbf{L_2}$};
        \draw[->,line join=round,line cap=round,line width=3pt,color=B]%
        (2, 8.5) -- (0.5, 7) -> (0.5,3.5);
        \node [color=B] at (0.5,2.5) {$\mathbf{L}$};
    \end{tikzpicture}\\
    \caption{Notation for slice rotations for a $7 \times 7 \times 7$ Rubik's Cube. Red marks the three slices corresponding to face R, and blue for face L}
    \label{fig:move_notation}
\end{figure}

For the following definitions, we proceed similarly to \cite{bonzio_loi_peruzzi_2018}. The set of all valid moves of the $n \times n \times n$ Rubik's Cube clearly forms a group with the operation of composition ($\cdot$), which we denote $\mathbf{M}_n$. This group is generated by the slice rotations defined above. We see that a move $m \in \mathbf{M}_n$ results in a permutation of the stickers of the cube. Since there are $6n^2$ stickers on an $n \times n \times n$ cube, we may define a group homomorphism
\begin{equation*}
    \varphi: \mathbf{M}_n \longrightarrow \mathbf{S}_{6n^2}
\end{equation*}
\noindent which sends moves $m$ to permutations $\varphi(m) \in \mathbf{S_{6n^2}}$ corresponding to the permutation of the stickers induced by $m$.

The symmetry group of the $n \times n \times n$ Rubik's Cube, which comprises of all permutations of stickers caused by a sequence of valid moves, can then be formally defined as $\mathbf{G} := \varphi(\mathbf{M}_n)$. This can also be expressed as $\mathbf{M}_n/ker(\varphi)$, which is formed by all classes of moves that produce the same configuration of the Rubik's Cube. For example, the moves $RL$ and $FB$ are different elements in $\mathbf{M}_n$ but both map to $\text{id}_{S_{6n^2}}$ under $\varphi$, meaning they correspond to the same element in $\mathbf{G}$.

Next, we consider two important subsets of $\mathbf{S}_{6n^2}$. We call the subset of $\mathbf{S}_{6n^2}$ corresponding to all permutations of stickers of the $n 
\times n \times n$ Rubik's Cube that can be obtained by disassembling and reassembling the cubies in such a way that preserves orbits of cubie permutations (i.e., we allow swapping two cubies if and only if there is a valid sequence of moves that brings one cubie to the position of the other cubie) the \textit{space of configurations} of the $n \times n \times n$ Rubik's Cube, denoted $\mathcal{S}_{\text{conf}}$ \footnote{This definition slightly differs from its analogue in \cite{bonzio_loi_peruzzi_2018} for purposes of generalizability to $n \times n \times n$ and clarification of allowed permutations}. We observe that $\mathcal{S}_{\text{conf}}$ can also be represented as the group generated by slice rotations, edge flips (which flip the two stickers on one edge cubie), and corner twists (which cycle the three stickers on one corner cubie). Clearly, $\mathbf{G} \subset \mathcal{S}_{\text{conf}}$. We call a configuration $s \in \mathcal{S}_{\text{conf}}$ valid if it can be obtained from the solved state by slice rotations. We call the subset of $\mathbf{S}_{6n^2}$ corresponding to all valid configurations that are physically distinct the \textit{space of distinct valid configurations}, denoted $\mathcal{S}_{\text{phys}}$.

\section{The $n \times n \times n$ Symmetry Group} \label{sec:group}

Let $\mathbf{G}$ act on the left on $\mathcal{S}_{\text{conf}}$:

\begin{gather*}
    \mathbf{G} \times \mathcal{S}_{\text{conf}} \longrightarrow \mathcal{S}_{\text{conf}}\\
    (g,s) \longmapsto g \cdot s
\end{gather*}

\subsection{The subgroups of $\mathcal{S}_{\text{conf}}$} \label{subsec:sconf_subsets}
In this section, we study the structure of $\mathbf{G}$ by analysing the subgroups of $\mathcal{S}_{\text{conf}}$. The significant subgroups of $\mathcal{S}_{\text{conf}}$ discussed in this section are $\mathbf{C'}$, which permutes and orients corner cubies, $\mathbf{E}_S'$, which permutes and orients single edges, $\mathbf{E}_{C_i}'$, which permutes and orients the orbit of coupled edges in slice $i$, $\mathbf{Z}_{C_i}'$, which permutes the orbit of center corners in slice $i$, and $\mathbf{Z}_{E_{i,j}}'$, which permutes the orbit of center edges in slices $i$ and $j$.

\begin{theorem} \label{thrm:corners_subgroup_all_perms}
    $\mathbf{C'} \cong \mathbf{S}_8 \times \mathbb{Z}_3^8$
\end{theorem}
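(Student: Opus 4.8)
The plan is to build an explicit map $\Phi\colon \mathbf{C'} \to \mathbf{S}_8 \times \mathbb{Z}_3^8$, show it is a bijection, and check it is a homomorphism. Every element of $\mathcal{S}_{\text{conf}}$ permutes whole cubies orientation-preservingly (it is generated by slice rotations, edge flips, and corner twists, all of which do), so each $g \in \mathbf{C'}$ — which by definition moves only the $24$ corner stickers — induces a permutation $\sigma_g \in \mathbf{S}_8$ of the eight corner cubies, and, after fixing once and for all one distinguished sticker on each corner cubie and one distinguished facelet slot in each corner position, a twist vector $x_g \in \mathbb{Z}_3^8$ whose $i$-th coordinate counts the clockwise thirds separating the distinguished sticker of cubie $i$ from the distinguished slot of the position that cubie $i$ occupies in configuration $g$. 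Set $\Phi(g) := (\sigma_g, x_g)$. Injectivity is immediate, since a corner configuration is determined by where each cubie goes and how far it is twisted, so the real content is (i) that $\Phi$ is a homomorphism and (ii) that it is onto.

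For surjectivity I would hit the two factors separately, which together with injectivity also pins down $|\mathbf{C'}| = 8!\cdot 3^8$. The $\mathbb{Z}_3^8$ factor is handled by the corner-twist generators of $\mathcal{S}_{\text{conf}}$ and their conjugates: twisting a single corner cubie fixes every other sticker, so for each $i$ this produces an element with $\sigma_g = \mathrm{id}$ and $x_g$ the $i$-th standard basis vector, and these generate a copy of $\mathbb{Z}_3^8$ inside $\ker(g \mapsto \sigma_g)$. For the $\mathbf{S}_8$ factor I would use that $\mathcal{S}_{\text{conf}}$ explicitly permits disassembling and reassembling the cube so long as cubie orbits are respected, and that the eight corners form one orbit; hence an arbitrary permutation of the corner cubies already lies in $\mathbf{C'}$. (If one prefers to stay inside the move group, it suffices to realize one pure corner three-cycle by a commutator, recall that corner three-cycles generate $\mathbf{A}_8$, and adjoin a single orbit-respecting transposition of corners to reach all of $\mathbf{S}_8$.)

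Finally I would assemble the structure. The twist subgroup $N := \ker(g \mapsto \sigma_g) \cong \mathbb{Z}_3^8$ is normal with $\mathbf{C'}/N \cong \mathbf{S}_8$, and the pure-permutation elements furnish a splitting $\mathbf{S}_8 \hookrightarrow \mathbf{C'}$, so $\mathbf{C'}$ is generated by two subgroups isomorphic to $\mathbf{S}_8$ and $\mathbb{Z}_3^8$ meeting trivially. To upgrade this to a \emph{direct} product one must check that this copy of $\mathbf{S}_8$ centralizes $N$: conjugating the elementary twist at cubie $i$ by a pure permutation carrying cubie $i$ to cubie $j$ should return exactly the elementary twist at cubie $j$ with unchanged sign. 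I expect this commutation to be the crux, because it is equivalent to the assertion that corner twists may be regarded as attached to the cubies rather than to the positions, and it is precisely the step that constrains how the orientation reference is chosen — equivalently, it is the point at which, when verifying that $\Phi$ is a homomorphism, one must confirm that twist vectors add rather than getting permuted. Once that is settled, $\mathbf{C'} \cong \mathbf{S}_8 \times \mathbb{Z}_3^8$, with the order count $8!\cdot 3^8$ serving as a consistency check on surjectivity.
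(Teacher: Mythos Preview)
Your approach is more ambitious than the paper's: the paper's proof simply sets up a bijection between $\mathbf{C'}$ and pairs $(\sigma, x) \in \mathbf{S}_8 \times \mathbb{Z}_3^8$ by choosing an orientation reference at each corner position, and stops there. The symbol $\cong$ is being used only to record a bijection of sets, and indeed only the cardinality $8!\cdot 3^8$ is ever used downstream.

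Your attempt to upgrade this to a group isomorphism contains a genuine error at exactly the point you flag as the crux. You say that for $\mathbf{S}_8$ to centralize $N$, conjugating the elementary twist $t_i$ by a pure permutation $p$ carrying cubie $i$ to cubie $j$ should return $t_j$. But $p\,t_i\,p^{-1} = t_j$ is precisely the condition for a \emph{semidirect} product in which $\mathbf{S}_8$ acts on $\mathbb{Z}_3^8$ by permuting coordinates; for a \emph{direct} product you would need $p\,t_i\,p^{-1} = t_i$, and that fails whenever $p$ moves position $i$. In fact $\mathbf{C'}$, regarded as the subgroup of $\mathbf{S}_{24}$ acting on the corner stickers, is the wreath product $\mathbb{Z}_3 \wr \mathbf{S}_8 = \mathbb{Z}_3^8 \rtimes \mathbf{S}_8$, which has the correct order $3^8\cdot 8!$ but is not isomorphic to $\mathbf{S}_8 \times \mathbb{Z}_3^8$ (for instance, its center is the diagonal copy of $\mathbb{Z}_3$, not all of $\mathbb{Z}_3^8$). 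No choice of orientation reference makes the twist vectors simply add under composition without being permuted. So your map $\Phi$ is the desired bijection, matching what the paper actually proves, but it is \emph{not} a homomorphism onto the direct product, and the step you deferred cannot be completed.
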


\begin{theorem} \label{thrm:single_edges_subgroup_all_perms}
    $\mathbf{E}_S' \cong \mathbf{S}_{12} \times \mathbb{Z}_2^{12}$
\end{theorem}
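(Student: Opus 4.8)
The plan is to rerun the argument for Theorem~\ref{thrm:corners_subgroup_all_perms} with single edges in place of corners. Observe first that single edges exist only for odd $n$; assume $n\ge 3$, so that there are exactly twelve of them, one at the midpoint of each of the cube's twelve edges, four on each face, each carrying two stickers and hence having two orientations. The elementary but essential observation is that, restricted to the twelve single edges, a face turn of the $n\times n\times n$ cube induces precisely the permutation that the corresponding face turn of the ordinary $3\times3\times3$ cube induces on its twelve edges: it cyclically permutes the four single edges on that face, possibly flipping some of them, and fixes the other eight. Thus the action of $\mathcal{S}_{\text{conf}}$ on the single edges reproduces the classical edge action of the $3\times3\times3$ cube, enriched by the edge flips that $\mathcal{S}_{\text{conf}}$ provides. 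Since $\mathbf{E}_S'$ consists of permutations of the twenty-four single-edge stickers respecting the twelve-cubie partition, it is automatically contained in the group of all permutations and orientations of twelve two-stickered cubies, which has order $12!\cdot 2^{12}=|\mathbf{S}_{12}\times\mathbb{Z}_2^{12}|$; hence it suffices to realise every permutation in $\mathbf{S}_{12}$ and every orientation pattern in $\mathbb{Z}_2^{12}$.

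The orientation factor is immediate: edge flips are among the generators of $\mathcal{S}_{\text{conf}}$, flipping one single-edge cubie leaves all other stickers fixed, and the twelve such flips commute and are independent, so they generate a copy of $\mathbb{Z}_2^{12}$.

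For the permutation factor, transfer the two standard facts about $3\times3\times3$ edges through the identification above. A suitably chosen commutator $[a,b]$ of face turns, with $a$ and $b$ overlapping in essentially a single single-edge cubie, acts as a $3$-cycle of single edges; conjugating such commutators by appropriate face-turn sequences moves the active triple of single edges around and thereby produces all $3$-cycles, hence $\mathbf{A}_{12}$. A single face turn is a $4$-cycle of single edges, an odd permutation, so together with $\mathbf{A}_{12}$ it generates all of $\mathbf{S}_{12}$. Combining the two paragraphs, $\mathbf{E}_S'$ contains generators for both the $\mathbf{S}_{12}$-factor and the $\mathbb{Z}_2^{12}$-factor, so it is the full order-$12!\cdot 2^{12}$ group of permutations and orientations of the single edges; this is the content of $\mathbf{E}_S'\cong\mathbf{S}_{12}\times\mathbb{Z}_2^{12}$.

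The part I expect to require the most care is the identification itself --- checking that every face turn, and (for larger odd $n$) every central-slice rotation that moves single edges, acts on the twelve single edges exactly as one of the six face turns of the $3\times3\times3$ cube acts on its edges --- together with exhibiting an explicit commutator that is a genuine $3$-cycle of single edges, so as to secure $\mathbf{A}_{12}\le\mathbf{E}_S'$. Everything afterwards --- the odd permutation coming from one face turn, the independent edge flips, and the final order count --- is routine, and the argument is uniform in odd $n$, with $n=3$ the degenerate case in which the single edges are all the edges of the cube.
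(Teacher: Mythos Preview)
Your proposal is correct, but it works considerably harder than the paper does. The paper's proof is essentially definitional: since $\mathcal{S}_{\text{conf}}$ is the space of configurations obtained by \emph{disassembling and reassembling} cubies within their orbits (equivalently, the group generated by slice rotations together with arbitrary edge flips and corner twists), every permutation of the twelve single edges and every orientation vector is realised directly by reassembly. The paper therefore just defines the orientation convention, observes the bijection $\mathbf{E}_S' \leftrightarrow (\tau_s, z)$, and stops. Your argument---building $\mathcal{A}_{12}$ from commutators of face turns, adjoining one face turn for the odd coset, and noting that edge flips supply $\mathbb{Z}_2^{12}$---is the style of reasoning the paper deploys only later, in Section~\ref{subsec:gn_subgroups}, for the analogous subgroups $\mathbf{E}_S$, $\mathbf{C}$, etc.\ of $\mathbf{G}$, where reassembly is no longer available and one must genuinely produce moves. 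In the present setting you could replace the whole commutator paragraph by the single sentence ``a transposition of two single edges is a legal reassembly, so $\mathbf{S}_{12}\le\mathbf{E}_S'$.'' Your approach does buy something small: it shows that the full $\mathbf{S}_{12}$ on single edges is already achieved by face turns alone, without invoking reassembly for the permutation factor---but that is not what the theorem asks for.
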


\begin{figure}[b]
        \centering
        \begin{subfigure}[b]{.4\textwidth}
            \centering
            \begin{tikzpicture}[scale=.8]
                \readarray{Front}{%
                    Y&X&Y&%
                    X&W&X&%
                    Y&X&Y&%
                }
                \readarray{Right}{%
                    B&X&R&%
                    X&O&X&%
                    R&X&B&%
                }
                \readarray{Top}{%
                    R&X&B&%
                    X&G&X&%
                    B&X&R&%
                }
                \cube{3}
            \end{tikzpicture}\\
            \caption{Positions with $0$ orientation for one corner of a $3 \times 3 \times 3$ Rubiks's Cube}
            \label{fig:orientation_corners}
        \end{subfigure}
        \hspace{1 cm}
        \begin{subfigure}[b]{.4\textwidth}
            \centering
            \begin{tikzpicture}[scale=.8]
                \readarray{Front}{%
                    X&B&X&%
                    R&W&R&%
                    X&B&X&%
                }
                \readarray{Right}{%
                    X&R&X&%
                    B&O&B&%
                    X&R&X&%
                }
                \readarray{Top}{%
                    X&R&X&%
                    B&G&B&%
                    X&R&X&%
                }
                \cube{3}
            \end{tikzpicture}\\
            \caption{Positions with $0$ orientation for one single edge of a $3 \times 3 \times 3$ Rubiks's Cube}
            \label{fig:orientation_single_edges}
        \end{subfigure}
        \caption{Orientation definition for corner cubies and single edge cubies}
        \label{fig:orientation_corners_single}
    \end{figure}
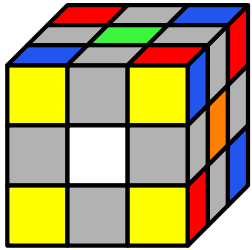
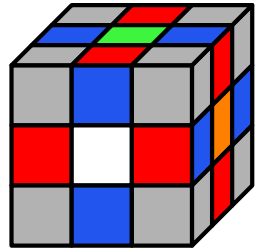

\begin{proof}
    First, we note that $\mathbf{C'}$ is the same for all cubes and that $\mathbf{E}_S'$ is the same for all odd cubes, since the only slice rotations that affect corners and single edges are outer face rotations. 
    
    We see that there are always 8 corners on any cube and 12 single edges on any odd cube, so we label permutations $\sigma \in \mathbf{S}_8$ for corners and $\tau_s \in \mathbf{S}_{12}$. 
    
    We proceed to define orientation for corners and center edges as shown in Figure \ref{fig:orientation_corners_single}. Orientation for a corner cubie is defined as $x_i \in \mathbb{Z}_3$ where 0 is when the White/Yellow sticker of the cubie lines up with the White/Yellow faces in the solved state (as seen in Figure \ref{fig:orientation_corners}), orientation 1 is a clockwise cycle of the stickers of orientation 0, and orientation 2 is a clockwise cycle of the stickers of orientation 1. We express $(x_1, x_2, \dots, x_8)$ as a vector $x \in \mathbb{Z}_3^8$.
    
    Orientation for a single edge cubie is defined as $z_i \in \mathbb{Z}_2$ where 0 is when the cubie matches the alternating pattern shown in Figure \ref{fig:orientation_single_edges} such that the orientation of the single edge in the solved state is 0. We express $(z_1, z_2, \dots, z_{12})$ as a vector $z \in \mathbb{Z}_2^{12}$.
    
    Therefore, there is a bijection between $\mathbf{C'}$ and pairs of permutations and orientation vectors $(\sigma, x)$, so $\mathbf{C'} \cong \mathbf{S}_8 \times \mathbb{Z}_3^8$. Similarly, there is a bijection between $\mathbf{E}_S'$ and pairs of permutations and orientation vectors $(\tau_s, z)$, so $\mathbf{E}_S' \cong \mathbf{S}_{12} \times \mathbb{Z}_2^{12}$.
\end{proof}

\begin{theorem} \label{thrm:coupled_edges_subgroup_all_perms}
    $\mathbf{E}_{C_i}' \cong \mathbf{S}_{24} \times \mathbb{Z}_2^{24}$
\end{theorem}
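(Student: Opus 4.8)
The plan is to mirror the proof of Theorems~\ref{thrm:corners_subgroup_all_perms} and~\ref{thrm:single_edges_subgroup_all_perms}: I would exhibit a bijection between $\mathbf{E}_{C_i}'$ and pairs $(\tau_c, w)$ consisting of a permutation $\tau_c \in \mathbf{S}_{24}$ and an orientation vector $w \in \mathbb{Z}_2^{24}$, and then check that this correspondence is compatible with composition, so that it upgrades to the stated isomorphism.

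First I would pin down the size of the orbit of coupled edges in slice $i$. Each of the $12$ cube edges carries a coupled edge at depth $i$ measured from each of its two endpoints, and in the range of $i$ for which coupled edges exist these two cubies are always distinct (neither is the single edge, which sits strictly in the middle), giving $12 \cdot 2 = 24$ coupled edges attached to slice $i$. I would then argue that these $24$ cubies form a single orbit under $\mathbf{M}_n$ — in particular that the mirror pair of coupled edges lying on a common cube edge is move-equivalent — by exhibiting an explicit move sequence (for instance a commutator of an outer face turn with the $i$\textsuperscript{th} internal slice rotation, the analogue of the standard ``wing $3$-cycle'') that cycles three of these cubies including such a mirror pair. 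This is the one genuinely new ingredient relative to the corner and single-edge cases: there the relevant cubies move only under outer face turns and a single orbit is immediate, whereas here internal slice rotations are essential and one has to verify that the naive chirality of a coupled-edge slot is not move-invariant. Granting the single orbit of size $24$, I label its permutations $\tau_c \in \mathbf{S}_{24}$.

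Next I would define orientation for a coupled edge cubie exactly as single-edge orientation was defined in Theorem~\ref{thrm:single_edges_subgroup_all_perms} and Figure~\ref{fig:orientation_single_edges}: a coupled edge has two stickers, so in each of its $24$ slots there are two placements; after fixing a consistent alternating reference so that the solved state receives orientation $0$, declaring the solved placement $0$ and the swapped one $1$ assigns an orientation in $\mathbb{Z}_2$, and collecting these gives $w \in \mathbb{Z}_2^{24}$.

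Finally, by the definition of $\mathcal{S}_{\text{conf}}$ — disassembly is allowed to swap any two cubies lying in a common move-orbit (in any reassembled orientations), and edge flips act on single edge cubies — every pair $(\tau_c, w)$ is realized while all other stickers are fixed: the permitted transpositions within the orbit generate all of $\mathbf{S}_{24}$, the edge flips generate all of $\mathbb{Z}_2^{24}$, and these act independently on the $24$ coupled edges. Hence the correspondence $\mathbf{E}_{C_i}' \leftrightarrow (\tau_c, w)$ is a bijection, and, as in the earlier theorems, it respects composition, so $\mathbf{E}_{C_i}' \cong \mathbf{S}_{24} \times \mathbb{Z}_2^{24}$. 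I expect the orbit-counting/connectedness step to be the only real obstacle; the orientation bookkeeping and the compatibility check are routine repetitions of the arguments already given.
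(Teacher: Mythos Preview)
Your plan is correct and follows the same bijection-with-$(\text{permutation},\text{orientation})$ template as the paper, but your emphasis is inverted. You flag the single-orbit verification as ``the one genuinely new ingredient'' and treat orientation as a routine copy of the single-edge case; the paper does the opposite. It dispatches the orbit size $24$ in one line (pointing to Figure~\ref{fig:orientation:coupled_edges}) and instead singles out what it calls ``a more interesting pattern'': the orbit of an individual \emph{sticker} on a coupled edge reaches only $24$ of the $48$ sticker slots in the cubie orbit, so under legal moves the orientation of a coupled edge is determined by its position. The paper then \emph{defines} orientation~$0$ via this move-invariant sticker orbit rather than via an imported alternating convention.

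For the bare isomorphism $\mathbf{E}_{C_i}'\cong\mathbf{S}_{24}\times\mathbb{Z}_2^{24}$ either orientation convention works, so your argument goes through. But the paper's choice is load-bearing downstream: because orientation is defined by the sticker orbit, it is automatically invariant under every generator of $\mathbf{M}_n$, which is precisely condition~5 of Theorem~\ref{thrm:first_law_necessary_odd} ($y_i=0$). Your ``alternating reference like single edges'' would not obviously enjoy this invariance, and you would have to redo the analysis later. So the sticker-orbit observation you omit is the real content the paper is packaging here; the orbit-connectedness you worry about is the part the paper treats as evident.
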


\begin{proof}
    We examine the orbit of one coupled edge cubie in slice $i$. We see that the cubie can be in one of 24 positions, so permutations for coupled edges can be labeled as $\tau_{c_i} \in \mathbf{S}_{24}, 1 < i \leq \left\lfloor\frac{n}{2}\right\rfloor$. One such orbit can be represented by looking at coupled edges on the $4 \times 4 \times 4$ as shown in Figure \ref{fig:orientation:coupled_edges}.
    
    A more interesting pattern emerges when we look at the orbit of the stickers on that cubie, as it does not reach all possible sticker locations on the orbit of the coupled edge cubie. Physically, this means that the orientation of coupled edges is fixed by their permutation. We define orientation for coupled edges by $y_{i_k} \in \mathbb{Z}_2$, where 0 orientation is matching the pattern in Figure \ref{fig:orientation:coupled_edges} such that the orientation of each coupled edge in the solved state is 0. Again, we express $(y_{i_1}, y_{i_2}, \dots, y_{i_{24}})$ as a vector $y_i \in \mathbb{Z}_2^{24}$.
    
    Therefore, similarly to our proof for corners and single edges, we establish a bijection between $\mathbf{E}_{C_i}'$ and the pair $(\tau_{c_i}, y_i)$, giving $\mathbf{E}_{C_i}' \cong \mathbf{S}_{24} \times \mathbb{Z}_2^{24}$.
\end{proof}

\begin{figure}[b]
    \centering
    \begin{tikzpicture}[scale=.8]
        \readarray{Front}{%
            X&G&O&X&%
            O&X&X&G&%
            G&X&X&O&%
            X&O&G&X&%
        }
        \readarray{Right}{%
            X&G&O&X&%
            O&X&X&G&%
            G&X&X&O&%
            X&O&G&X&%
        }
        \readarray{Top}{%
            X&G&O&X&%
            O&X&X&G&%
            G&X&X&O&%
            X&O&G&X&%
        }
        \cube{4}
    \end{tikzpicture}\\
    \caption{Positions with $0$ orientation for one coupled edge of a $4 \times 4 \times 4$ Rubiks's Cube}
    \label{fig:orientation:coupled_edges}
\end{figure}
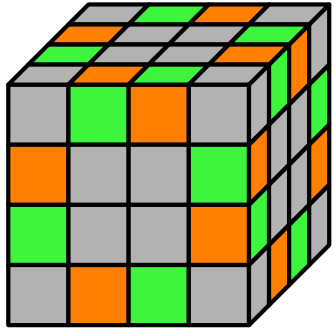

\begin{theorem} \label{thrm:center_corners_subgroup_all_perms}
    $\mathbf{Z}_{C_i}' \cong \mathbf{S}_{24}$
\end{theorem}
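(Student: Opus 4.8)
The plan is to follow the template of the proofs of Theorems~\ref{thrm:corners_subgroup_all_perms}--\ref{thrm:coupled_edges_subgroup_all_perms}, with the simplification that the orientation factor now disappears entirely. First I would pin down the orbit of center corners associated to slice $i$: on each of the six faces, the center corners lying on the two diagonals at ``depth'' $i$ occupy exactly four positions (one near each of the four corners of that face), so the orbit consists of $6 \cdot 4 = 24$ cubies, as is visible in Figure~\ref{fig:cubie_types}. I would then note that these $24$ cubies genuinely form a single orbit under slice rotations: a quarter turn of a face cyclically permutes its own four depth-$i$ center corners, while a quarter turn of an $i$-th slice perpendicular to a face carries a center corner on that face onto the matching depth-$i$ position of an adjacent face, so composing moves of these two kinds reaches any of the $24$ positions from any other. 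Consequently we may label each element of $\mathbf{Z}_{C_i}'$ by the permutation $\zeta_{c_i} \in \mathbf{S}_{24}$ it induces on this orbit.

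The remaining work is to check that this labelling is a bijection. Injectivity is where the simplification over the earlier theorems occurs: a center cubie carries only a single sticker, so there is no orientation data to track (no edge flip or corner twist acts on it), and the permutation of the $24$ cubies determines the configuration completely. For surjectivity I would invoke the definition of $\mathcal{S}_{\text{conf}}$ directly: since all $24$ center corners lie in one orbit of cubie permutations, disassembling the cube and reassembling it so as to apply an arbitrary permutation to these $24$ cubies (leaving everything else fixed) produces an element of $\mathcal{S}_{\text{conf}}$, which by construction lies in $\mathbf{Z}_{C_i}'$. The labelling is visibly a group homomorphism, since composing elements of $\mathbf{Z}_{C_i}'$ composes the induced permutations, so it is a group isomorphism $\mathbf{Z}_{C_i}' \cong \mathbf{S}_{24}$.

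I expect the one nontrivial point to be the transitivity claim in the first paragraph — that the $24$ center corners form a \emph{single} orbit rather than several. The count of four per face is immediate from the geometry, but to conclude $\mathbf{S}_{24}$ (and not a product of smaller symmetric groups) I would want to exhibit the connecting moves explicitly, or at least argue carefully that quarter turns of faces together with quarter turns of the perpendicular $i$-th slices suffice to link the six faces' worth of center corners. Once that is in place, the absence of orientation makes the rest of the argument routine, which is why this theorem is shorter than its predecessors.
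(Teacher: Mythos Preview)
Your proposal is correct and follows essentially the same approach as the paper: identify the orbit of $24$ center corners (four per face over six faces), observe that center cubies carry a single sticker and hence have no orientation data, and conclude the isomorphism with $\mathbf{S}_{24}$. The paper is in fact briefer than your write-up---it dispatches the orbit structure by reference to a figure and asserts the conclusion directly---so your care with the transitivity argument and the explicit appeal to the definition of $\mathcal{S}_{\text{conf}}$ for surjectivity only makes the argument more complete, not different.
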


\begin{theorem} \label{thrm:center_edges_subgroup_all_perms}
    $\mathbf{Z}_{E_{i,j}}' \cong \mathbf{S}_{24}$
\end{theorem}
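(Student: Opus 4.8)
The plan is to follow the same pattern as for the center corners in Theorem \ref{thrm:center_corners_subgroup_all_perms}, the only structural difference being that a center-edge orbit is indexed by an ordered pair of slices rather than a single slice. First I would record the two facts that make the situation rigid. A center edge carries exactly one sticker, so there is no notion of orientation to track; and the center edges lying in slices $i$ and $j$ form a single orbit of exactly $24$ cubies under slice rotations — four on each of the six faces, cyclically permuted by the $90^\circ$ rotation of that face and linked across faces by inner slice rotations. Since an element of $\mathbf{Z}_{E_{i,j}}'$ by definition fixes every cubie outside this orbit, and a center edge's single sticker is determined by its position, fixing a labelling of the $24$ positions gives an injective homomorphism $\mathbf{Z}_{E_{i,j}}' \hookrightarrow \mathbf{S}_{24}$.

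For surjectivity I would invoke the disassembly description of $\mathcal{S}_{\text{conf}}$ established in Section \ref{sec:definitions}: given any two of the $24$ center edges, disassembling the cube, interchanging just those two cubies, and reassembling produces a sticker permutation that preserves all cubie orbits, hence lies in $\mathcal{S}_{\text{conf}}$; and since it moves nothing but these two center edges it lies in $\mathbf{Z}_{E_{i,j}}'$. Thus $\mathbf{Z}_{E_{i,j}}'$ contains every transposition of the $24$ center edges, and transpositions generate $\mathbf{S}_{24}$, so $\mathbf{Z}_{E_{i,j}}' \cong \mathbf{S}_{24}$. Equivalently and more constructively, one could exhibit a commutator $[m,n]$ of slice rotations that realizes a $3$-cycle of center edges while fixing everything else, conjugate it within $\mathcal{S}_{\text{conf}}$ to obtain enough $3$-cycles to generate $\mathbf{A}_{24}$, and then use a single disassembly swap to pick up an odd permutation; but the disassembly argument already delivers all of $\mathbf{S}_{24}$ in one step.

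The step I expect to be the main obstacle is the orbit claim itself: verifying that the center edges in slices $i$ and $j$ really do constitute one orbit of size exactly $24$. On each face the four relevant cells form a single cycle under that face's $90^\circ$ rotation, but one must check that inner slice rotations merge the six per-face cycles into one orbit without also absorbing the mirror-image family corresponding to the pair $(j,i)$ — center pieces cannot be reflected by cube moves, so these stay distinct, which is precisely why the index is an ordered pair. Depending on how fully this was pinned down when $\mathbf{Z}_{E_{i,j}}'$ was introduced, the argument either reduces to a short remark or requires displaying one explicit slice-rotation sequence carrying a chosen center edge from its home face to an adjacent face.
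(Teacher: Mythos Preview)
Your proposal is correct and follows essentially the same approach as the paper: both argue that the orbit of center edges in slices $i,j$ has size $24$, that a single sticker means there is no orientation to track, and hence that arbitrary disassembly permutations realize all of $\mathbf{S}_{24}$. The paper's proof is in fact terser than yours---after noting via a figure that the $(i,j)$ and $(j,i)$ orbits never coincide and labeling the permutations $\rho_{e_{i,j}}\in\mathbf{S}_{24}$, it simply asserts ``This also clearly gives $\mathbf{Z}_{E_{i,j}}' \cong \mathbf{S}_{24}$''---so your explicit injection/surjection via transpositions and your discussion of the orbit-size obstacle (including the mirror-image $(j,i)$ issue) spell out exactly what the paper leaves implicit.
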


\begin{proof}
    We start by examining the orbits of one center cubie. Figure \ref{fig:orientation_centers_orbits} shows center cubie orbits for a $7 \times 7 \times 7$ cube. 
    
    We clearly see that center corners can be in any of 24 positions, so we label permutations of the centers in slice $i$ $\rho_{c_i} \in \mathbf{S}_{24}$ for $1 < i \leq \left\lfloor\frac{n}{2} \right\rfloor$. Since center cubies only have one sticker, they do not have orientation, so clearly $\mathbf{Z}_{C_i}' \cong \mathbf{S}_{24}$. 
    
    We note that orbits for center edges are also of size 24 (e.g. Red and Blue orbits in Figure \ref{fig:orientation_center_edges} never coincide). Labeling center edges provides more of a challenge, so label based on the slices $i$ and $j$ parallel to $U$ and $R$ respectively which contain the center edge in the orbit in the top left quadrant of the front face (i.e., in Figure \ref{fig:orientation_center_edges}, the red orbit would be labeled $(2,3)$, blue $(3,2)$, green $(2,4)$, and orange $(3,4)$). So, we can label permutations of center edges $\rho_{e_{i,j}} \in \mathbf{S}_{24}, 1 < i \leq \left\lfloor\frac{n}{2} \right\rfloor, 1 < j \neq i \leq \left\lceil\frac{n}{2} \right\rceil$. This also clearly gives $\mathbf{Z}_{E_{i,j}}' \cong \mathbf{S}_{24}$.
\end{proof}

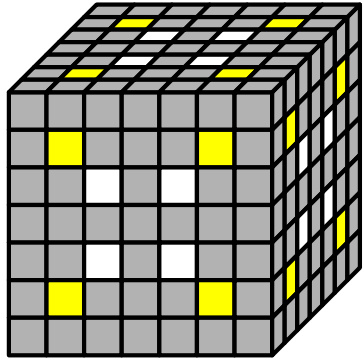
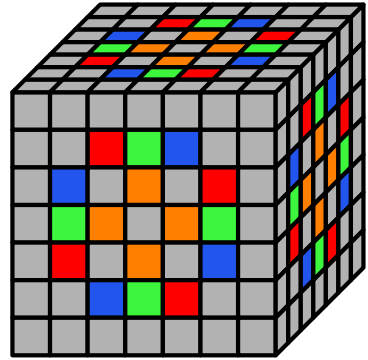
\begin{figure}[H]
        \centering
        \begin{subfigure}[b]{.4\textwidth}
            \centering
            \begin{tikzpicture}[scale=.5 ]
                \readarray{Front}{%
                    X&X&X&X&X&X&X&%
                    X&Y&X&X&X&Y&X&%
                    X&X&W&X&W&X&X&%
                    X&X&X&X&X&X&X&%
                    X&X&W&X&W&X&X&%
                    X&Y&X&X&X&Y&X&%
                    X&X&X&X&X&X&X&%
                }
                \readarray{Right}{%
                    X&X&X&X&X&X&X&%
                    X&Y&X&X&X&Y&X&%
                    X&X&W&X&W&X&X&%
                    X&X&X&X&X&X&X&%
                    X&X&W&X&W&X&X&%
                    X&Y&X&X&X&Y&X&%
                    X&X&X&X&X&X&X&%
                }
                \readarray{Top}{%
                    X&X&X&X&X&X&X&%
                    X&Y&X&X&X&Y&X&%
                    X&X&W&X&W&X&X&%
                    X&X&X&X&X&X&X&%
                    X&X&W&X&W&X&X&%
                    X&Y&X&X&X&Y&X&%
                    X&X&X&X&X&X&X&%
                }
                \cube{7}
            \end{tikzpicture}\\
            \caption{Orbits of center corners for a $7 \times 7 \times 7$ Rubiks's Cube}
            \label{fig:orientation_center_corners}
        \end{subfigure}
        \hspace{1 cm}
        \begin{subfigure}[b]{.4\textwidth}
            \centering
            \begin{tikzpicture}[scale=.5]
                \readarray{Front}{%
                    X&X&X&X&X&X&X&%
                    X&X&R&G&B&X&X&%
                    X&B&X&O&X&R&X&%
                    X&G&O&X&O&G&X&%
                    X&R&X&O&X&B&X&%
                    X&X&B&G&R&X&X&%
                    X&X&X&X&X&X&X&%
                }
                \readarray{Right}{%
                    X&X&X&X&X&X&X&%
                    X&X&R&G&B&X&X&%
                    X&B&X&O&X&R&X&%
                    X&G&O&X&O&G&X&%
                    X&R&X&O&X&B&X&%
                    X&X&B&G&R&X&X&%
                    X&X&X&X&X&X&X&%
                }
                \readarray{Top}{%
                    X&X&X&X&X&X&X&%
                    X&X&R&G&B&X&X&%
                    X&B&X&O&X&R&X&%
                    X&G&O&X&O&G&X&%
                    X&R&X&O&X&B&X&%
                    X&X&B&G&R&X&X&%
                    X&X&X&X&X&X&X&%
                }
                \cube{7}
            \end{tikzpicture}\\
            \caption{Orbits of center edges for a $7 \times 7 \times 7$ Rubiks's Cube}
            \label{fig:orientation_center_edges}
        \end{subfigure}
        \caption{Orbits of centers}
        \label{fig:orientation_centers_orbits}
    \end{figure}

\clearpage

\begin{theorem} \label{thrm:sconf_odd}
    $\mathcal{S}_{conf_{odd}} \cong \mathbf{C'} \times \mathbf{E}_S' \times \mathbf{E}_C'^{\frac{n-3}{2}} \times \mathbf{Z}_C'^{\frac{n-3}{2}} \times \mathbf{Z}_E'^{\frac{(n-3)^2}{4}}$
\end{theorem}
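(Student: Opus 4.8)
The plan is to realize the right-hand side as an internal direct product of subgroups of $\mathcal{S}_{conf_{odd}}$, one subgroup for each orbit-family of cubies, and then to substitute the isomorphism types already computed in Theorems \ref{thrm:corners_subgroup_all_perms}--\ref{thrm:center_edges_subgroup_all_perms}. The first step is to record the full list of cubie orbits of an odd cube: the $8$ corners (one orbit), the $12$ single edges (one orbit), the coupled edges (one orbit per slice $i$ with $1<i\le\lfloor n/2 \rfloor$, hence $\frac{n-3}{2}$ orbits), the center corners (again $\frac{n-3}{2}$ orbits, one per slice $i$ with $1<i\le\lfloor n/2 \rfloor$), the center edges (one orbit per admissible pair $(i,j)$, of which there are $\frac{n-3}{2}\cdot\frac{n-3}{2}=\frac{(n-3)^2}{4}$), and the $6$ fixed centers, each of which is its own orbit. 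A sticker count ($24$ from corners, $24+24(n-3)$ from edges, and $12(n-3)+6(n-3)^2+6$ from centers, totalling $6n^2$) confirms that these orbit-families exhaust every sticker, so nothing is missed. By definition the subgroups $\mathbf{C'},\mathbf{E}_S',\mathbf{E}_{C_i}',\mathbf{Z}_{C_i}',\mathbf{Z}_{E_{i,j}}'$ are the elements of $\mathcal{S}_{conf}$ supported on the stickers of a single such family; the fixed centers carry one sticker each and never move under any move, so they contribute only the trivial subgroup and drop out of the product, which is also why single edges and fixed centers force the restriction to odd $n$.

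Next I would check the two standing hypotheses of the internal-direct-product criterion. Pairwise commutativity is immediate, since distinct orbit-families involve disjoint sets of stickers, so the corresponding subgroups have disjoint supports and commute element-wise (so the decomposition is an honest direct product, with no semidirect twisting). For the intersection condition, an element lying in one of the listed subgroups and also in the subgroup generated by all the others is simultaneously supported on one orbit-family and on a disjoint union of other orbit-families, hence supported on the empty set, hence the identity.

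The crux is showing that the listed subgroups generate all of $\mathcal{S}_{conf_{odd}}$. Given an arbitrary $s\in\mathcal{S}_{conf_{odd}}$: by definition $s$ is realized by a disassembly/reassembly that preserves cubie orbits, so $s$ carries each cubie to a cubie in the same orbit and therefore restricts to a well-defined action on each orbit-family --- a pair $(\sigma,x)\in\mathbf{S}_8\times\mathbb{Z}_3^8$ on the corners, a pair $(\tau_s,z)\in\mathbf{S}_{12}\times\mathbb{Z}_2^{12}$ on the single edges, a pair in $\mathbf{S}_{24}\times\mathbb{Z}_2^{24}$ on each coupled-edge orbit, and a permutation in $\mathbf{S}_{24}$ on each center-corner and center-edge orbit. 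By Theorem \ref{thrm:corners_subgroup_all_perms} there is $c\in\mathbf{C'}$ inducing the same action on the corners, so $c^{-1}s$ fixes every corner sticker; by Theorem \ref{thrm:single_edges_subgroup_all_perms} we may likewise peel off a single-edge factor, and then by Theorems \ref{thrm:coupled_edges_subgroup_all_perms}, \ref{thrm:center_corners_subgroup_all_perms} and \ref{thrm:center_edges_subgroup_all_perms} one factor for each coupled-edge, center-corner and center-edge orbit. Because each peeled factor is supported on an orbit-family disjoint from all earlier ones, removing it does not disturb the stickers already fixed, so after removing all factors (and noting $s$ already fixes the fixed-center stickers) what remains fixes every sticker and is the identity. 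Hence $s$ is a product of elements of the listed subgroups, which together with the previous paragraph identifies $\mathcal{S}_{conf_{odd}}$ with their internal direct product. Replacing each factor by its isomorphism type from Theorems \ref{thrm:corners_subgroup_all_perms}--\ref{thrm:center_edges_subgroup_all_perms} and counting the factors as above yields the claimed formula.

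I expect the genuine work to be the orbit bookkeeping rather than the group theory: one must pin down exactly which slices contain coupled edges and center corners, verify that the center-edge orbits are in bijection with the pairs $(i,j)$ described before Theorem \ref{thrm:center_edges_subgroup_all_perms} with no overcounting (the two diagonals and the fixed center being the cells excluded from each face), and confirm the sticker total is $6n^2$. Once the partition of the stickers into orbit-families is in hand, the direct-product argument above is routine.
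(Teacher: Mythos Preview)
Your proposal is correct and follows essentially the same approach as the paper: decompose $\mathcal{S}_{\text{conf}}$ according to the cubie orbit-families (corners, single edges, coupled edges per slice, center corners per slice, center edges per admissible pair), count the families, and invoke Theorems~\ref{thrm:corners_subgroup_all_perms}--\ref{thrm:center_edges_subgroup_all_perms} for the isomorphism type of each factor. The paper's proof is terser---it simply asserts the bijection between $\mathcal{S}_{\text{conf}}$ and tuples $(\sigma,\tau_s,\tau_{c_i},\rho_{c_i},\rho_{e_{i,j}},x,y_i,z)$ and regroups---whereas you explicitly verify the internal direct product hypotheses (disjoint supports, trivial intersections, generation) and check the sticker count; but the underlying idea is the same.
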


\begin{proof}
    $\mathcal{S}_{\text{conf}}$ is defined as the set of all permutations of orbits and orientation changes. Therefore, we establish a bijection between $\mathcal{S}_{\text{conf}}$ and the tuple $(\sigma, \tau_s, \tau_{c_i}, \rho_{c_i}, \rho_{e_{i,j}}, x, y_i, z)$, with all permutations and orientation vectors defined in the above proofs, $1 < i \leq \frac{n-1}{2}, 1 < j \neq i \leq \frac{n+1}{2}$. This tuple is equivalent to $((\sigma, x), (\tau_s,z), (\tau_{c_i},y_i), \rho_{c_i}, \rho_{e_{i,j}})$, which is equivalent to $(c, e_s, e_{c_i}, z_{c_i}, z_{e_{i,j}})$, $c \in \mathbf{C'}, e_s \in \mathbf{E}_S', e_{c_i} \in \mathbf{E}_{C_i}', z_{c_i} \in \mathbf{Z}_{C_i}'$, and $z_{e_{i,j}} \in \mathbf{Z}_{E_{i,j}}'$, so $\mathcal{S}_{conf_{odd}} \cong \mathbf{C'} \times \mathbf{E}_S' \times \mathbf{E}_{C_i}' \times \mathbf{Z}_{C_i}' \times \mathbf{Z}_{E_{i,j}}'$. Therefore, $\mathcal{S}_{conf_{odd}} \cong \mathbf{C'} \times \mathbf{E}_S' \times \mathbf{E}_C'^{\frac{n-3}{2}} \times \mathbf{Z}_C'^{\frac{n-3}{2}} \times \mathbf{Z}_E'^{\frac{(n-3)^2}{4}}$.
\end{proof}

\begin{theorem} \label{thrm:sconf_even}
    $\mathcal{S}_{conf_{even}} \cong \mathbf{C'} \times \mathbf{E}_C'^{\frac{n-2}{2}} \times \mathbf{Z}_C'^{\frac{n-2}{2}} \times \mathbf{Z}_E'^{\frac{(n-2)(n-4)}{4}}$
\end{theorem}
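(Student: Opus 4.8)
The plan is to mirror the proof of Theorem \ref{thrm:sconf_odd} almost verbatim, adjusting only the bookkeeping of which orbit-types occur for even $n$. First I would recall that $\mathcal{S}_{\text{conf}}$ is, by definition, the set of all orbit-preserving permutations together with all independent orientation changes of the cubies, so it factors as a direct product over the distinct cubie orbit-types. For even $n$ the list of cubie types is shorter than in the odd case: Figure \ref{fig:cubie_types_even} shows there are no fixed centers and no single edges, so the factors $\mathbf{E}_S'$ and any fixed-center contribution simply disappear, leaving only corners, coupled edges, center corners, and center edges.

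Next I would count the number of orbits of each remaining type as a function of even $n$. There is always exactly one corner orbit, contributing $\mathbf{C'}$ (unchanged from the odd case by Theorem \ref{thrm:corners_subgroup_all_perms}, since only outer face turns move corners). The coupled-edge orbits are indexed by the slice index $i$ with $1 \le i \le \frac{n}{2}$, but $i=1$ is the outer face whose coupled edges together with... — more carefully, following the odd-case convention that $\mathbf{E}_{C_i}'$ is indexed by $1 < i \le \lfloor n/2 \rfloor$ should be re-examined; for even $n$ the outer slice $i=1$ also contains coupled edges, so the count of coupled-edge orbits is $\frac{n-2}{2}$ after matching the paper's indexing convention (the outer-layer edge orbit being absorbed into the corner/edge labelling exactly as the statement asserts). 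Likewise center corners give $\frac{n-2}{2}$ orbits, one per internal slice $1 < i \le n/2$. For center edges, the orbit is labelled by an ordered pair $(i,j)$ of distinct slice indices with $1 < i \le \frac{n}{2}$ (parallel to $U$) and $1 < j \le \frac{n}{2}$ (parallel to $R$), $j \neq i$; since for even $n$ both ranges have $\frac{n-2}{2}$ values and we must exclude $i=j$, the number of ordered pairs is $\left(\frac{n-2}{2}\right)^2 - \frac{n-2}{2} = \frac{(n-2)(n-4)}{4}$, which is exactly the exponent in the statement.

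Then I would assemble the bijection. Exactly as in Theorem \ref{thrm:sconf_odd}, an element of $\mathcal{S}_{\text{conf}_{\text{even}}}$ corresponds bijectively to a tuple $(\sigma, x, \tau_{c_i}, y_i, \rho_{c_i}, \rho_{e_{i,j}})$ over the index ranges above, which regroups as $((\sigma,x),(\tau_{c_i},y_i),\rho_{c_i},\rho_{e_{i,j}}) = (c, e_{c_i}, z_{c_i}, z_{e_{i,j}})$ with $c \in \mathbf{C'}$, $e_{c_i}\in\mathbf{E}_{C_i}'$, $z_{c_i}\in\mathbf{Z}_{C_i}'$, $z_{e_{i,j}}\in\mathbf{Z}_{E_{i,j}}'$. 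This identification is a group isomorphism because composing moves (and orientation changes) acts independently on each orbit, so $\mathcal{S}_{\text{conf}_{\text{even}}} \cong \mathbf{C'} \times \prod_i \mathbf{E}_{C_i}' \times \prod_i \mathbf{Z}_{C_i}' \times \prod_{i,j}\mathbf{Z}_{E_{i,j}}'$. Applying Theorems \ref{thrm:coupled_edges_subgroup_all_perms}, \ref{thrm:center_corners_subgroup_all_perms}, \ref{thrm:center_edges_subgroup_all_perms} to see that each factor of a given type is isomorphic to a fixed group ($\mathbf{E}_C' := \mathbf{S}_{24}\times\mathbb{Z}_2^{24}$, $\mathbf{Z}_C' := \mathbf{S}_{24}$, $\mathbf{Z}_E' := \mathbf{S}_{24}$), and substituting the three orbit counts, yields $\mathcal{S}_{\text{conf}_{\text{even}}} \cong \mathbf{C'} \times \mathbf{E}_C'^{\frac{n-2}{2}} \times \mathbf{Z}_C'^{\frac{n-2}{2}} \times \mathbf{Z}_E'^{\frac{(n-2)(n-4)}{4}}$.

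The only genuine obstacle here is the combinatorial orbit-counting for even $n$ — in particular pinning down exactly which slice indices to include so that the counts come out to $\frac{n-2}{2}$ and $\frac{(n-2)(n-4)}{4}$ rather than something off by the outer layer. I expect this to require a careful reconciliation between the "disassemble–reassemble" picture of $\mathcal{S}_{\text{conf}}$ and the indexing conventions fixed in the proofs of Theorems \ref{thrm:coupled_edges_subgroup_all_perms}–\ref{thrm:center_edges_subgroup_all_perms}; a small figure analogous to Figure \ref{fig:orientation_centers_orbits} for an even cube (say $n=6$ or $n=8$) would make the orbit enumeration transparent. Everything else — the direct-product decomposition and the term-by-term isomorphism — is a routine repetition of the odd case and poses no difficulty.
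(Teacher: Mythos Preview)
Your proposal is correct and follows essentially the same approach as the paper: the paper's proof is literally a one-sentence ``similarly to Theorem~\ref{thrm:sconf_odd}'' that establishes the bijection $(c, e_{c_i}, z_{c_i}, z_{e_{i,j}})$ after dropping the single-edge and fixed-center factors, and your version just makes the orbit counts $\frac{n-2}{2}$, $\frac{n-2}{2}$, $\frac{(n-2)(n-4)}{4}$ explicit. Your slight hesitation about whether the outer slice $i=1$ contributes a coupled-edge orbit is unnecessary---by the paper's convention coupled-edge orbits are indexed by $1 < i \le \lfloor n/2\rfloor$, and the outer layer's edge positions are occupied by corners, not coupled edges---but you arrive at the right count regardless.
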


\begin{proof}
    Similarly to the proof of Theorem \ref{thrm:sconf_odd}, we establish a bijection between $\mathcal{S}_{\text{conf}}$ and $(c, e_{c_i}, z_{c_i}, z_{e_{i,j}})$, $c \in \mathbf{C'}, e_{c_i} \in \mathbf{E}_{C_i}', z_{c_i} \in \mathbf{Z}_{C_i}'$, and $z_{e_{i,j}} \in \mathbf{Z}_{E_{i,j}}'$, so $\mathcal{S}_{conf_{even}} \cong \mathbf{C'} \times \mathbf{E}_{C_i}' \times \mathbf{Z}_{C_i}' \times \mathbf{Z}_{E_{i,j}}' \cong \mathbf{C'} \times \mathbf{E}_C'^{\frac{n-2}{2}} \times \mathbf{Z}_C'^{\frac{n-2}{2}} \times \mathbf{Z}_E'^{\frac{(n-2)(n-4)}{4}}$.
\end{proof}

\begin{corollary} \label{crly:s_odd_card}
    $|\mathcal{S}_{conf_{odd}}| = 8! \cdot 3^8 \cdot 12! \cdot 2^{12(n-2)} \cdot (24!)^{\frac{(n-3)(n+1)}{4}}$
\end{corollary}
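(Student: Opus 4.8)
The plan is to pass from the direct-product decomposition of Theorem \ref{thrm:sconf_odd} to cardinalities, using that the order of a finite direct product is the product of the orders of its factors. First I would record the orders of the five building blocks from the theorems already established: Theorem \ref{thrm:corners_subgroup_all_perms} gives $|\mathbf{C'}| = |\mathbf{S}_8 \times \mathbb{Z}_3^8| = 8! \cdot 3^8$; Theorem \ref{thrm:single_edges_subgroup_all_perms} gives $|\mathbf{E}_S'| = |\mathbf{S}_{12} \times \mathbb{Z}_2^{12}| = 12! \cdot 2^{12}$; Theorem \ref{thrm:coupled_edges_subgroup_all_perms} gives $|\mathbf{E}_{C_i}'| = |\mathbf{S}_{24} \times \mathbb{Z}_2^{24}| = 24! \cdot 2^{24}$; and Theorems \ref{thrm:center_corners_subgroup_all_perms} and \ref{thrm:center_edges_subgroup_all_perms} give $|\mathbf{Z}_{C_i}'| = |\mathbf{Z}_{E_{i,j}}'| = |\mathbf{S}_{24}| = 24!$.

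Next I would substitute these into the decomposition of Theorem \ref{thrm:sconf_odd}, namely
\[
|\mathcal{S}_{conf_{odd}}| = |\mathbf{C'}| \cdot |\mathbf{E}_S'| \cdot |\mathbf{E}_C'|^{\frac{n-3}{2}} \cdot |\mathbf{Z}_C'|^{\frac{n-3}{2}} \cdot |\mathbf{Z}_E'|^{\frac{(n-3)^2}{4}},
\]
obtaining $8! \cdot 3^8 \cdot 12! \cdot 2^{12} \cdot (24! \cdot 2^{24})^{\frac{n-3}{2}} \cdot (24!)^{\frac{n-3}{2}} \cdot (24!)^{\frac{(n-3)^2}{4}}$, and then collect like factors. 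The powers of $2$ combine as $2^{12} \cdot 2^{24 \cdot \frac{n-3}{2}} = 2^{12 + 12(n-3)} = 2^{12(n-2)}$, and the three powers of $24!$ combine with total exponent $\frac{n-3}{2} + \frac{n-3}{2} + \frac{(n-3)^2}{4} = \frac{4(n-3) + (n-3)^2}{4} = \frac{(n-3)(n+1)}{4}$, which gives exactly the claimed formula.

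The only point worth noting is that all exponents in the final expression are genuine nonnegative integers for every odd $n \ge 3$: $n-3$ and $n+1$ are both even, so $\frac{(n-3)(n+1)}{4}$ is an integer, and likewise $\frac{n-3}{2}$ and $\frac{(n-3)^2}{4}$ are integers, so every step above is well defined. There is no substantive obstacle here — the argument is a bookkeeping computation that multiplies the factor orders supplied by the preceding theorems and simplifies the exponents of $2$ and of $24!$.
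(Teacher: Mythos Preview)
Your proposal is correct and matches the paper's intent exactly: the corollary is stated without proof as an immediate consequence of Theorem~\ref{thrm:sconf_odd}, and your computation --- multiplying the orders of the factors from Theorems~\ref{thrm:corners_subgroup_all_perms}--\ref{thrm:center_edges_subgroup_all_perms} and simplifying the exponents of $2$ and $24!$ --- is precisely the routine bookkeeping the paper leaves implicit. There is nothing to add.
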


\begin{corollary} \label{crly:s_even_card}
    $|\mathcal{S}_{conf_{even}}| = 8! \cdot 3^8 \cdot 2^{12(n-2)} \cdot (24!)^{\frac{(n)(n-2)}{4}}$
\end{corollary}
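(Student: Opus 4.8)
The plan is to combine the structural isomorphism of Theorem~\ref{thrm:sconf_even} with the orders of the five building-block subgroups established in Theorems~\ref{thrm:corners_subgroup_all_perms}, \ref{thrm:coupled_edges_subgroup_all_perms}, \ref{thrm:center_corners_subgroup_all_perms}, and \ref{thrm:center_edges_subgroup_all_perms}. Since cardinality is multiplicative over direct products, it suffices to read off $|\mathbf{C'}| = 8!\cdot 3^8$, $|\mathbf{E}_C'| = 24!\cdot 2^{24}$, $|\mathbf{Z}_C'| = 24!$, and $|\mathbf{Z}_E'| = 24!$, raise each to the multiplicity appearing in Theorem~\ref{thrm:sconf_even}, and take the product.

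First I would substitute these into $|\mathcal{S}_{conf_{even}}| = |\mathbf{C'}|\cdot |\mathbf{E}_C'|^{(n-2)/2}\cdot |\mathbf{Z}_C'|^{(n-2)/2}\cdot |\mathbf{Z}_E'|^{(n-2)(n-4)/4}$, obtaining $8!\cdot 3^8 \cdot (24!\cdot 2^{24})^{(n-2)/2}\cdot (24!)^{(n-2)/2}\cdot (24!)^{(n-2)(n-4)/4}$. Next I would split off the power of two, namely $2^{24\cdot(n-2)/2} = 2^{12(n-2)}$, which is exactly the claimed factor. Then I would collect the exponent of $24!$ and simplify: $\tfrac{n-2}{2} + \tfrac{n-2}{2} + \tfrac{(n-2)(n-4)}{4} = (n-2) + \tfrac{(n-2)(n-4)}{4} = \tfrac{(n-2)(n-4+4)}{4} = \tfrac{n(n-2)}{4}$, matching the exponent in the statement.

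There is no real obstacle here beyond bookkeeping; the one point worth a sentence is that $n$ even makes $\tfrac{n-2}{2}$ and $\tfrac{n(n-2)}{4}$ nonnegative integers, so the exponents are well defined, and that $\tfrac{n-2}{2} = \lfloor n/2\rfloor - 1$ and $\tfrac{(n-2)(n-4)}{4}$ are precisely the counts of internal slices (one coupled-edge orbit and one center-corner orbit each) and of admissible unordered slice-pairs indexing center-edge orbits that Theorem~\ref{thrm:sconf_even} records. With that remark in place, the displayed chain of equalities completes the proof; the analogous substitution into Theorem~\ref{thrm:sconf_odd} yields Corollary~\ref{crly:s_odd_card}, the extra $12!$ and the matching adjustment of the power of two coming from $\mathbf{E}_S'$.
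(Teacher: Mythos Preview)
Your proposal is correct and matches the paper's intended approach: the corollary is stated without proof immediately after Theorem~\ref{thrm:sconf_even}, and the implicit argument is exactly the cardinality computation you carry out, multiplying the orders of the factor groups from Theorems~\ref{thrm:corners_subgroup_all_perms}, \ref{thrm:coupled_edges_subgroup_all_perms}, \ref{thrm:center_corners_subgroup_all_perms}, and \ref{thrm:center_edges_subgroup_all_perms} and simplifying the exponent on $24!$.
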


We calculate $|\mathcal{S}_{\text{conf}}|$ for some small $n$. For a $2 \times 2 \times 2$ Rubik's Cube, we get $|\mathcal{S}_{\text{conf}}| = 8! \cdot 3^8 = |C'|$, which makes sense since the Pocket Cube is just made of 8 corners. The $3 \times 3 \times 3$ gives $|\mathcal{S}_{\text{conf}}| = 8! \cdot 3^8 \cdot 12! \cdot 2^{12} = |C' \times E_S'|$, which again makes sense since the original Rubik's Cube has only 8 corners and 12 edges. For a $5 \times 5 \times 5$, we get \footnote{This is in accordance with equation (1) from  \cite{bonzio_loi_peruzzi_2018}.} $|\mathcal{S}_{\text{conf}}| = 8! \cdot 3^8 \cdot 12! \cdot 2^{36} \cdot (24!)^3$.\\

We have shown that an element of $\mathcal{S}_{\text{conf}}$ can be represented as the tuple $(\sigma, \tau_s, \tau_{c_i}, \rho_{c_i}, \rho_{e_j}, x, y_i, z)$. Now, we use such tuples to represent elements of  $\mathcal{S}_{\text{conf}}$. The solved state or \textit{initial configuration} corresponds to $(\text{id}_{S_8}, \text{id}_{S_{12}}, \text{id}_{S_{24}\: i}, \text{id}_{S_{24}\: i}, \text{id}_{S_{24}\: i,j}, 0, 0, 0)$. \\

We now state and prove the necessary condition of the first law of cubology for an $n \times n \times n$ Rubik's Cube.\footnote{This statement of the first law of cubology refines the incorrect statement in Theorem 20 of  \cite{bonzio_loi_peruzzi_2018}. Condition 5 has been altered to match our definition of orientation in Theorem \ref{thrm:coupled_edges_subgroup_all_perms}. More importantly, condition 2 has been updated to give a stricter restraint on center edges.}

\begin{theorem} \label{thrm:first_law_necessary_odd}
    Any valid configuration $(\sigma, \tau_s, \tau_{c_i}, \rho_{c_i}, \rho_{e_{i,j}}, x, y_i, z)$ of an odd $n \times n \times n$ Rubik's Cube must satisfy the conditions:
    \begin{thmlist}
        \item $\text{sgn}(\sigma) = \text{sgn}(\tau_s) = \text{sgn}(\rho_{c_i})$
        \item $\text{sgn}(\rho_{e_{i,j}}) = \text{sgn}(\sigma)\text{sgn}(\tau_i)\text{sgn}(\tau_j)$
        \item $\sum_i x_i = 0$
        \item $\sum_i z_i = 0$
        \item $y_i = 0$
    \end{thmlist}
\end{theorem}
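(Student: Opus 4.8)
The plan is to verify that each of the five conditions is preserved by every generator of $\mathbf{M}_n$, i.e. by each slice rotation; since the solved state satisfies all five conditions and valid configurations are exactly those reachable from the solved state by slice rotations, an induction on word length then gives the result. Concretely, it suffices to check the effect of a single clockwise $90^\circ$ turn $F_k$ (all other faces are conjugate to this by a rotation of the whole cube, which permutes the roles of the coordinates but preserves all the sign and sum relations), splitting into the cases $k=1$ (an outer face turn, which moves corners, single edges, coupled edges, center corners and center edges) and $1 < k \le \lfloor n/2\rfloor$ (an internal slice, which moves only coupled edges, center corners and center edges).

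First I would handle the permutation-parity conditions 1 and 2. An outer face turn $F$ acts on the 8 corners as a $4$-cycle, on the 12 single edges as a $4$-cycle, and on each orbit of coupled edges as a product of two $4$-cycles; on the 24 center corners of any given slice it is a single $4$-cycle, and on a center-edge orbit $\rho_{e_{i,j}}$ it is a single $4$-cycle when exactly one of the slices $i,j$ is the one being turned, and a product of two $4$-cycles (hence even) otherwise. So an outer turn flips $\mathrm{sgn}(\sigma)$, $\mathrm{sgn}(\tau_s)$, and each $\mathrm{sgn}(\rho_{c_i})$ simultaneously, preserving condition 1; and it flips $\mathrm{sgn}(\rho_{e_{i,j}})$ precisely when it flips exactly one of $\mathrm{sgn}(\tau_i)$, $\mathrm{sgn}(\tau_j)$ (here $\tau_1 := \sigma$, matching the indexing in condition 2) — the case analysis on whether the turned face is slice $i$, slice $j$, both, or neither matches the product $\mathrm{sgn}(\sigma)\mathrm{sgn}(\tau_i)\mathrm{sgn}(\tau_j)$ term by term, so condition 2 is preserved. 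An internal slice turn $F_k$, $k>1$, leaves $\sigma$, $\tau_s$ untouched, acts as two $4$-cycles (even) on each coupled-edge orbit and on center-corner orbits it doesn't touch, as one $4$-cycle on the center-corner orbit of slice $k$, and analogously on center-edge orbits; one checks the same bookkeeping goes through.

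Next I would treat the orientation conditions 3, 4, 5. For condition 5, Theorem \ref{thrm:coupled_edges_subgroup_all_perms} already establishes that the orientation of a coupled edge is \emph{determined} by its position, and $0$ in the solved state; hence any configuration reachable by moves has $y_i=0$, which is exactly condition 5. For condition 3 (corner twist sum) and condition 4 (single-edge flip sum) I would use the standard argument: with the orientation conventions of Figure \ref{fig:orientation_corners_single}, an outer face turn permutes four corners and, tracking how each corner's distinguished (White/Yellow) sticker moves, the four orientation increments sum to $0$ in $\mathbb{Z}_3$ (the convention is rigged so that the $U$ and $D$ faces contribute $0$ and the four lateral faces contribute increments $+1,+2,+1,+2$ or similar, which sum to $0$); likewise the alternating pattern defining single-edge orientation is preserved mod $2$ by every outer face turn, so $\sum_i z_i$ is unchanged. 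Internal slices don't move corners or single edges, so they trivially preserve conditions 3 and 4.

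I expect the main obstacle to be condition 2 — the combined parity relation for center edges — because it is the one condition whose correctness is genuinely sensitive to the labeling scheme, and the bookkeeping of "which $4$-cycles appear" requires care: one must confirm that the four positions a center edge in orbit $(i,j)$ cycles through under a turn of slice $i$ really do form a single $4$-cycle (not, say, two $2$-cycles or a fixed-point configuration), and that a turn of an unrelated slice splits the orbit into two $4$-cycles rather than leaving part of it fixed; getting the parity contributions exactly right, and reconciling the indexing $\tau_1 = \sigma$ with the three subcases, is where an error could hide. I would verify this explicitly on the $5\times5\times5$ or $7\times7\times7$ picture (Figure \ref{fig:orientation_center_edges}) before asserting the general pattern. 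A secondary subtlety is making sure the reduction "every face turn is conjugate to $F_k$ by a whole-cube rotation" is legitimate in $\mathcal{S}_{\text{conf}}$ and that whole-cube rotations act on the coordinate tuple in a way that manifestly preserves each of the five conditions (they just relabel corners/edges/centers and permute the slice indices).
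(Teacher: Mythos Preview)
Your overall strategy is exactly the paper's: show that each of the five conditions is invariant under the generators of $\mathbf{M}_n$, split into outer face turns and internal slice rotations, and conclude by induction from the solved state. The treatment of conditions 3, 4, 5 is correct and matches the paper essentially verbatim.

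However, the cycle-structure claims you make for conditions 1 and 2 contain concrete errors that, if carried through, would make the invariance checks \emph{fail}. First, an outer face turn $F$ acts on \emph{every} center-edge orbit $(i,j)$ as a single $4$-cycle (the four cubies of that orbit lying on the turned face are cycled), hence oddly; your ``otherwise two $4$-cycles'' clause is wrong here, and with it condition 2 would not survive a face turn since $\mathrm{sgn}(\sigma)$ flips while $\mathrm{sgn}(\tau_{c_i}),\mathrm{sgn}(\tau_{c_j})$ do not. Second, an internal slice $F_k$ acts on coupled-edge orbit $k$ as a \emph{single} $4$-cycle (odd), not two: the slice meets exactly one coupled edge of that orbit on each of the four cube edges it cuts, and these four cycle. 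Third, and crucially for condition 1, an internal slice $F_k$ acts on center-corner orbit $k$ as \emph{two} $4$-cycles (even), not one: the slice contains two center corners of orbit $k$ on each of the four adjacent faces, eight in total, splitting into two $4$-cycles. With your stated parities, an internal slice would fix $\mathrm{sgn}(\sigma)$ but flip $\mathrm{sgn}(\rho_{c_k})$, contradicting condition 1.

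The correct bookkeeping, as in the paper, is: face turns are odd on $\sigma,\tau_s,\rho_{c_i},\rho_{e_{i,j}}$ and even on each $\tau_{c_i}$; an internal slice $k$ is even on $\sigma,\tau_s$ and on every $\rho_{c_i}$, and odd on $\tau_{c_k}$ and on $\rho_{e_{i,j}}$ exactly when $k\in\{i,j\}$. You rightly flagged condition 2 as the danger zone and planned an explicit $5\times5\times5$ or $7\times7\times7$ check; that check would indeed catch all three errors above, so the proposal is salvageable, but as written the parity analysis is wrong.
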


\begin{proof}
    We prove by checking that these conditions are invariant under the generators of $\mathbf{M}_n$. Since the solved state trivially satisfies all of these conditions, as any valid configurations are in the orbit of the solved state, this would show that all valid configurations satisfy these conditions.
    
    \begin{enumerate}
        \item Face turns consist of 4-cycles on corners, single edges, orbits of center corners, and orbits of center edges and two 4-cycles on orbits of coupled edges. Since this induces odd permutations on corners, single edges, and orbits of center corners, this preserves condition 1. 
        
        Internal slice rotations consist of 4-cycles on orbits of coupled edges and center edges, two 4-cycles on orbits of center corners, and act as the identity permutation on corners and single edges. Since this induces even permutations on corners, single edges, and orbits of center corners, this also preserves condition 1. 
        
        Therefore, $\text{sgn}(\sigma) = \text{sgn}(\tau_s) = \text{sgn}(\rho_{c_i})$.
        \item Face turns induce odd permutations on corners and any orbit of center edges and induce even permutations on any orbits of coupled edges. So, $\text{sgn}(\rho_{e_{i,j}}) = \text{sgn}(\sigma)\text{sgn}(\tau_i)\text{sgn}(\tau_j)$ holds under face turns. 
        
        Rotations of internal slice $i$ act as the identity on corners, induce odd permutations of coupled edges in orbit $i$ and also induce odd permutations on orbits of center edges $i,j$ or $j,i$. Therefore, $\text{sgn}(\rho_{e_{i,j}}) = \text{sgn}(\sigma)\text{sgn}(\tau_i)\text{sgn}(\tau_j)$ is preserved for each slice $i$ under internal slice rotations.
        \item Internal slice rotations act as the identity on corners, so we only consider the effects of face turns. By our definition of corner orientation in Theorem \ref{thrm:corners_subgroup_all_perms}, turning the white or yellow face does not change the orientation of any corners, and turning any other face increase two corner orientations by 1 and decreases two corner orientations by 1, which shows $\sum_i x_i = 0$.
        \item Internal slice rotations also act as the identity on single edges, so we again only consider the effects of face turns. By our definition of corner orientation in Theorem \ref{thrm:single_edges_subgroup_all_perms}, any face turn changes the orientation of all 4 single edges on that face, which preserves $\sum_i z_i = 0$.
        \item By the definition of orientation for coupled edges in Theorem \ref{thrm:coupled_edges_subgroup_all_perms}, the orientation of any coupled edge never changes due to it being fixed by the orbits of its stickers.
    \end{enumerate}
\end{proof}

\begin{theorem} \label{thrm:first_law_necessary_even}
    Any valid configuration $(\sigma, \tau_{c_i}, \rho_{c_i}, \rho_{e_{i,j}}, x, y_i, z)$ of an even $n \times n \times n$ Rubik's Cube must satisfy the conditions:
    \begin{thmlist}
        \item $\text{sgn}(\sigma) = \text{sgn}(\rho_{c_i})$
        \item $\text{sgn}(\rho_{e_{i,j}}) = \text{sgn}(\sigma)\text{sgn}(\tau_i)\text{sgn}(\tau_j)$
        \item $\sum_i x_i = 0$
        \item $y_i = 0$
    \end{thmlist}
\end{theorem}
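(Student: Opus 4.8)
The plan is to mirror the proof of Theorem~\ref{thrm:first_law_necessary_odd} almost verbatim, dropping everything that refers to single edges, since an even cube has no single-edge cubies (and no fixed centers). Concretely, I would argue that each of the four listed conditions is invariant under the two families of generators of $\mathbf{M}_n$ — outer face turns and internal slice rotations — and then invoke the fact that the solved state satisfies all four conditions and every valid configuration lies in its orbit. Because $\mathbf{C'}$ and the orbit structure of corners are identical for all cube sizes (as noted in the proof of Theorem~\ref{thrm:corners_subgroup_all_perms}), and because the orbits of coupled edges, center corners, and center edges behave exactly as in the odd case, the cycle-structure bookkeeping carries over unchanged.

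For condition~$1$, I would observe that a face turn is a single $4$-cycle on the $8$ corners and a single $4$-cycle on each orbit of center corners (together with two $4$-cycles on each coupled-edge orbit), hence induces an odd permutation on both corners and each center-corner orbit, preserving $\mathrm{sgn}(\sigma)=\mathrm{sgn}(\rho_{c_i})$; an internal slice rotation fixes the corners pointwise and acts as two $4$-cycles on each center-corner orbit, hence is even on both, again preserving the equality. For condition~$2$, the same count as in Theorem~\ref{thrm:first_law_necessary_odd} applies: a face turn is odd on corners and odd on each center-edge orbit while being even on coupled-edge orbits, and a rotation of internal slice $i$ fixes corners, is odd on the coupled-edge orbit $i$, and is odd on each center-edge orbit indexed by $i$ (in either coordinate), so $\mathrm{sgn}(\rho_{e_{i,j}})=\mathrm{sgn}(\sigma)\mathrm{sgn}(\tau_i)\mathrm{sgn}(\tau_j)$ is maintained; here $\tau_i$ denotes the permutation of coupled edges in slice $i$, with the convention that $\tau_1:=\sigma$ on the outer slice so that the formula is uniform. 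For condition~$3$, since internal slice rotations fix corners entirely, only face turns matter, and by the orientation convention of Theorem~\ref{thrm:corners_subgroup_all_perms} a turn of the white/yellow face changes no corner orientation while any other face turn raises two corner orientations by $1$ and lowers two by $1$, so $\sum_i x_i$ is unchanged mod $3$. Condition~$4$ is immediate from Theorem~\ref{thrm:coupled_edges_subgroup_all_perms}: coupled-edge orientation is determined by permutation, so it never deviates from $0$ once it starts there.

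The only genuinely new point relative to the odd case is making sure that the indexing of slices, orbits, and the convention $\tau_1=\sigma$ for condition~$2$ is consistent with the even-cube labeling from Theorems~\ref{thrm:center_corners_subgroup_all_perms} and~\ref{thrm:center_edges_subgroup_all_perms} (where $1<i\le\frac{n-2}{2}$ etc.), and that no center-edge orbit is accidentally self-paired or fixed by a face turn when $n$ is even; but since the orbits all have size $24$ and the geometric picture is the one already established, this is routine. I do not expect any real obstacle: the argument is a direct transcription of the odd-cube proof with the single-edge clauses deleted, so the bulk of the work is simply checking that the cited cycle structures genuinely hold for even $n$ — which they do because they were never specific to odd $n$ in the first place.
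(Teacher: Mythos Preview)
Your proposal is correct and follows essentially the same approach as the paper: both argue by checking that each condition is invariant under face turns and internal slice rotations, invoking the same cycle-structure counts, and both defer conditions~3 and~4 directly to the odd-cube argument. The convention $\tau_1:=\sigma$ you introduce is unnecessary, since the center-edge indices $i,j$ in $\rho_{e_{i,j}}$ always satisfy $i,j>1$, but it does no harm.
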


\begin{proof}
    We proceed as for Theorem \ref{thrm:first_law_necessary_odd} and prove by checking that these conditions are invariant under the face turns and internal slice rotations
    
    \begin{enumerate}
        \item Face turns induce odd permutations on corners and any orbit of center corners, which preserves condition 1. Internal slice rotation act as the identity on corners and induce even permutations of the orbit of center corners in their slice, so condition 1 holds.
        \item Fact turns act as odd permutations on corners and center edges and as odd permutations on coupled edges, which preserved condition 2. Internal slice rotations induce odd permutations of center edges in their slice and coupled edges in their slice, while acting as the identity on other coupled edges and corners, which also preserves condition 2.
        \item This is equivalent to Condition 3 of Theorem \ref{thrm:first_law_necessary_odd}
        \item This is equivalent to Condition 5 of Theorem \ref{thrm:first_law_necessary_odd}
    \end{enumerate}
\end{proof}

\subsection{The subgroups of $\mathbf{G}$} \label{subsec:gn_subgroups}
In this section, we continue to study the structure of $\mathbf{G}$ by analyzing some important \textit{commuting} subgroups. For these subgroups, we again use a construction analogous to that of \cite{bonzio_loi_peruzzi_2018}. Namely, the first subgroups discussed in this section are $\mathbf{C}$, which permutes only corners, preserves orientation, and acts as the identity on all other cubies, $\mathbf{E}_S$, which permutes only single edges, $\mathbf{E}_{C_i}$, which permutes only coupled edges in slice $i$, $\mathbf{Z}_{C_i}$, which permutes only center corners in slice $i$, and $\mathbf{Z}_{E_{i,j}}$, which permutes only center edges in slice $i,j$. For each subgroup, we consider odd and even cubes separately. Then, we discuss the subgroups $\mathbf{O}_C$ and $\mathbf{O}_E$ which orient only corners and single edges respectively and preserve permutation.

\begin{theorem} \label{thrm:corners_only_subgroup}
    $\mathbf{C}_{odd} \cong \mathbf{C}_{even} \cong \mathcal{A}_8$
\end{theorem}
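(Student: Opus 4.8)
The plan is to prove two inclusions. For the easy direction I would show that $\mathbf{C}$ embeds into $\mathcal{A}_8$, assuming $n$ is large enough that the relevant constraining cubies are present (single edges for odd $n$, center corners for even $n$). By definition an element $c\in\mathbf{C}$ fixes every orientation, so its corner–orientation vector is $x=0$, and it acts as the identity on every non‑corner cubie, so in particular $\tau_s=\mathrm{id}$ when $n$ is odd and $\rho_{c_i}=\mathrm{id}$ when $n$ is even. Feeding this into condition $1$ of Theorem \ref{thrm:first_law_necessary_odd} (resp. Theorem \ref{thrm:first_law_necessary_even}) forces $\mathrm{sgn}(\sigma)=1$. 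Hence $c\mapsto\sigma$ is a well‑defined injective homomorphism $\mathbf{C}\hookrightarrow\mathcal{A}_8$ (injective because an element of $\mathbf{C}$ with trivial corner permutation and zero orientation is the identity, and the composition of zero‑orientation elements simply composes their corner permutations). The same computation applies verbatim to $\mathbf{C}_{odd}$ and $\mathbf{C}_{even}$; moreover corners are affected only by the outer face turns $F,B,R,L,U,D$, which permute and twist the eight corner cubies identically for every $n$, so $\mathbf{C}_{odd}$ and $\mathbf{C}_{even}$ are isomorphic to one another as well.

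For the hard direction I would show $\mathbf{C}\supseteq\mathcal{A}_8$ by producing enough $3$‑cycles, using that $\mathcal{A}_8=\langle(1\,2\,k):k=3,\dots,8\rangle$. First exhibit one element $c_0\in\mathbf{C}$ that is a pure $3$‑cycle of three corner cubies — zero orientation, identity on all other cubies. Such an element is a commutator $[m,n]=mnm^{-1}n^{-1}$ in which $m$ is a conjugate of a face turn sharing exactly one corner cubie with the face turn $n$, chosen so that the disturbances to edges and to the centers of the turned faces cancel while a single corner $3$‑cycle survives; since $c_0$ uses only outer face turns, one checks directly that the cancellation persists on the full $n\times n\times n$ cube, i.e. the single edges, all coupled‑edge orbits, and all center orbits are returned to place. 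Next, conjugate $c_0$ inside $\mathbf{G}$ by moves that change no corner orientation — for instance $U,D$ and $R^2,L^2,F^2,B^2$: if $g\in\mathbf{G}$ has trivial corner‑orientation effect, then $gc_0g^{-1}$ is again a pure $3$‑cycle with $x=0$ that is the identity on all other cubies, hence lies in $\mathbf{C}$, and its corner permutation is the corresponding conjugate $3$‑cycle. Running $c_0$ through these conjugations and taking products yields pure $3$‑cycles on enough corner triples to realize every $(1\,2\,k)$, so $\mathbf{C}\supseteq\mathcal{A}_8$; combined with the embedding, $\mathbf{C}_{odd}\cong\mathbf{C}_{even}\cong\mathcal{A}_8$.

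The main obstacle is the surjectivity step, and it splits into two finite but genuinely content‑bearing checks: (i) pinning down an explicit word $c_0$ and verifying by inspection that it is a pure corner $3$‑cycle on the $n\times n\times n$ cube for \emph{all} $n$, not only on the $3\times3\times3$; and (ii) confirming that conjugating $c_0$ by orientation‑preserving outer moves does generate all of $\mathcal{A}_8$ — equivalently, that the subgroup of $\mathbf{S}_8$ through which we are permitted to conjugate moves the support of $c_0$ around richly enough to reach every $(1\,2\,k)$. If step (ii) proves awkward to do by hand, it can be shortened using that $\mathcal{A}_8$ is simple and every proper subgroup has index at least $8$, so that exhibiting a small, explicit family of pure $3$‑cycles in $\mathbf{C}$ (not contained in any point stabilizer, nor in the exceptional index‑$8$ subgroups) already forces $\mathbf{C}=\mathcal{A}_8$.
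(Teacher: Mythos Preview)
Your proposal is correct and follows essentially the same route as the paper: the upper bound $\mathbf{C}\leqslant\mathcal{A}_8$ comes from condition~1 of Theorems~\ref{thrm:first_law_necessary_odd}/\ref{thrm:first_law_necessary_even}, and the lower bound from an explicit corner $3$-cycle commutator conjugated around to generate $\mathcal{A}_8$. The paper supplies the concrete word $m_c=[[R:U],D]=RUR^{-1}\,D\,RU^{-1}R^{-1}\,D^{-1}$ for your $c_0$ and then conjugates by arbitrary $g\in\mathbf{G}$; your version is slightly more careful in insisting that the conjugating elements preserve corner orientation (so that the conjugates genuinely stay in $\mathbf{C}$ rather than merely projecting to $3$-cycles in $\mathbf{S}_8$), a point the paper glosses over.
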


\begin{proof}
    We start as in our proof of Theorem \ref{thrm:corners_subgroup_all_perms} by noting that $\mathbf{C}$ is the same for any size Rubik's Cube.
    
    First, we show $\mathcal{A}_8 \leqslant \mathbf{C}$. We see that the move
    \begin{equation}
        m_c = [[R:U],D] = (RUR^{-1})D(RU^{-1}R^{-1})D^{-1}
    \end{equation}
    
    is a 3-cycle on corners for a $3 \times 3 \times 3$ Rubik's Cube (as shown in Figure \ref{fig:corners_3_cycle}) and for any $n \times n \times n$ Rubik's Cube. Since all the corners are in the same orbit, for any there exists an element $g \in G$ that brings it to one of the cubies permuted by $m_c$, so $g \cdot m_c \cdot g^{-1}$ gives 3-cycles containing any corners, which suffice to generate $\mathcal{A}_8$.
    
    Since there are 8 corners, clearly $\mathbf{C} \leqslant \mathbf{S}_8$. By condition 1 of Theorem \ref{thrm:first_law_necessary_odd} and condition 1 of Theorem \ref{thrm:first_law_necessary_even}, a permutation of corners that does not permute center corners must be even for both odd and even cubes, so $\mathbf{C} \leqslant \mathcal{A}_8$, which gives $\mathbf{C}_{odd} \cong \mathbf{C}_{even} \cong \mathcal{A}_8$.
\end{proof}

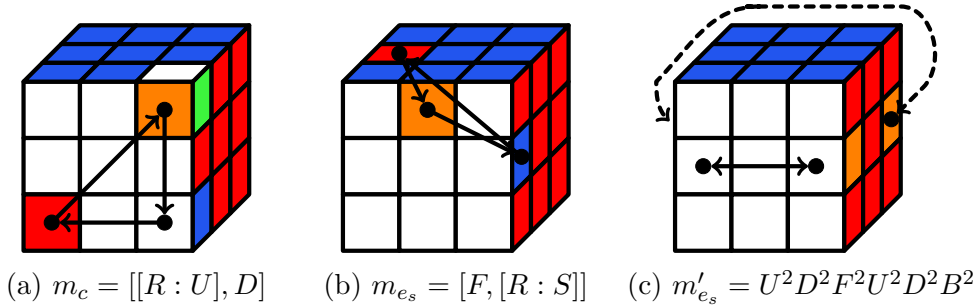
\begin{figure}[H]
    \centering
    \begin{subfigure}[b]{.3\textwidth}
        \centering
        \begin{tikzpicture}[scale=.75]
            \readarray{Front}{%
                W&W&O&%
                W&W&W&%
                R&W&W&%
            }
            \readarray{Right}{%
                G&R&R&%
                R&R&R&%
                B&R&R&%
            }
            \readarray{Top}{%
                B&B&B&%
                B&B&B&%
                B&B&W&%
            }
            \cube{3}
            \node[draw, shape=circle, fill=black, minimum size=2mm, inner sep=0] (a) at (.5,.5) {};
            \node[draw, shape=circle, fill=black, minimum size=2mm, inner sep=0] (b) at (2.5,2.5) {};
            \node[draw, shape=circle, fill=black, minimum size=2mm, inner sep=0] (c) at (2.5,.5) {};
            \draw[->, line join=round,line cap=round,ultra thick] (a) -> (b);
            \draw[->, line join=round,line cap=round,ultra thick] (b) -> (c);
            \draw[->, line join=round,line cap=round,ultra thick] (c) -> (a);
        \end{tikzpicture}\\
        \caption{$m_c = [[R:U],D]$}
        \label{fig:corners_3_cycle}
    \end{subfigure}
    \begin{subfigure}[b]{.3\textwidth}
        \centering
        \begin{tikzpicture}[scale=.75]
            \readarray{Front}{%
                W&O&W&%
                W&W&W&%
                W&W&W&%
            }
            \readarray{Right}{%
                R&R&R&%
                B&R&R&%
                R&R&R&%
            }
            \readarray{Top}{%
                B&B&B&%
                R&B&B&%
                B&B&B&% 
            }
            \cube{3}
            \node[draw, shape=circle, fill=black, minimum size=2mm, inner sep=0] (a) at (1.5,2.5) {};
            \node[draw, shape=circle, fill=black, minimum size=2mm, inner sep=0] (b) at (3+1/6,1.5+1/6) {};
            \node[draw, shape=circle, fill=black, minimum size=2mm, inner sep=0] (c) at (.5+1/3+1/6,3+1/3+1/6) {};
            \draw[->, line join=round,line cap=round,ultra thick] (a) -> (b);
            \draw[->, line join=round,line cap=round,ultra thick] (b) -> (c);
            \draw[->, line join=round,line cap=round,ultra thick] (c) -> (a);
        \end{tikzpicture}\\
        \caption{$m_{e_s} = [F,[R:S]]$}
        \label{fig:single_edges_3_cycle}
    \end{subfigure}
    \begin{subfigure}[b]{.35\textwidth}
        \centering
        \begin{tikzpicture}[scale=.75]
            \readarray{Front}{%
                W&W&W&%
                W&W&W&%
                W&W&W&%
            }
            \readarray{Right}{%
                R&R&R&%
                O&R&O&%
                R&R&R&%
            }
            \readarray{Top}{%
                B&B&B&%
                B&B&B&%
                B&B&B&% 
            }
            \cube{3}
            \node[draw, shape=circle, fill=black, minimum size=2mm, inner sep=0] (a) at (2.5,1.5) {};
            \node[draw, shape=circle, fill=black, minimum size=2mm, inner sep=0] (b) at (.5,1.5) {};
            \node[draw, shape=circle, fill=black, minimum size=2mm, inner sep=0] (c) at (3+5/6,2+1/3) {};
            \draw[<->, line join=round,line cap=round,ultra thick] (a) -> (b);
            \draw[<->, dashed, line join=round,line cap=round,ultra thick] (c) to[out=45, in=-135] (4.4,2.9) to[out=45, in=0] (4,4+1/3) to[out=180, in=180] (1.5,4+1/3) to[out=180,in=45] (-1/3,3) to[out=-90,in=135] (-0.1,2.3);
        \end{tikzpicture}\\
        \caption{$m_{e_s}' = U^2D^2F^2U^2D^2B^2$}
        \label{fig:single_edges_dual_pair}
    \end{subfigure}
    \caption{3-cycles for corners and single edges}
    \label{fig:corners_single_edges_3_cycle}
\end{figure}

\begin{theorem} \label{thrm:single_edges_only_subgroup}
    $\mathbf{E}_S \cong \mathcal{A}_{12}$
\end{theorem}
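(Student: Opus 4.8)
The plan is to mimic the structure of the proof of Theorem~\ref{thrm:corners_only_subgroup}, establishing the two inclusions $\mathcal{A}_{12} \leqslant \mathbf{E}_S$ and $\mathbf{E}_S \leqslant \mathcal{A}_{12}$ separately. First I would note, as in Theorem~\ref{thrm:single_edges_subgroup_all_perms}, that single edges only exist on odd cubes and that $\mathbf{E}_S$ is the same for every odd $n$, since the only slice rotations affecting single edges are outer face turns; hence it suffices to work on the $3\times 3\times 3$ cube. For the inclusion $\mathbf{E}_S \leqslant \mathbf{S}_{12}$ this is immediate from there being $12$ single edges; the containment in $\mathcal{A}_{12}$ then follows from condition~1 of Theorem~\ref{thrm:first_law_necessary_odd}: an element of $\mathbf{E}_S$ fixes all corners, so $\mathrm{sgn}(\sigma)=\mathrm{id}$ forces $\mathrm{sgn}(\tau_s)=\mathrm{id}$, i.e. $\tau_s$ is even. (I would also note it fixes orientation $z$ by definition of $\mathbf{E}_S$, so no flips appear.)

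For the reverse inclusion, I would exhibit an explicit element of $\mathbf{G}$ that acts as a $3$-cycle on single edges while fixing everything else, then conjugate it around. The excerpt already supplies the candidate move $m_{e_s} = [F,[R:S]]$ in Figure~\ref{fig:single_edges_3_cycle}; I would verify (by tracking stickers, or by appeal to the figure) that on the $3\times 3\times 3$ this is a pure $3$-cycle of single edges with trivial effect on corners, centers, and edge orientations, and check that the same commutator remains a pure single-edge $3$-cycle on any odd $n\times n\times n$ cube (the inner slice $S$ is the middle layer, so it does not disturb coupled edges or off-center centers in the required way). Since all $12$ single edges lie in one orbit under $\mathbf{G}$, for any three of them there is $g\in\mathbf{G}$ carrying the three edges moved by $m_{e_s}$ onto them; then $g\,m_{e_s}\,g^{-1}$ is a $3$-cycle on those three single edges (and still fixes everything else, being a conjugate of such an element). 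The $3$-cycles on a $12$-element set generate $\mathcal{A}_{12}$, so $\mathcal{A}_{12}\leqslant\mathbf{E}_S$, and combining the inclusions gives $\mathbf{E}_S\cong\mathcal{A}_{12}$.

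The main obstacle I anticipate is the verification that $m_{e_s}$ (and its conjugates) really act as the identity on \emph{all} other cubie types, not merely on corners — in particular that it does not permute or reorient any center cubies or coupled edges, and does not flip any single edge. On the $3\times 3\times 3$ there are no coupled edges or movable centers, so the check is short, but for general odd $n$ one must be careful that $[F,[R:S]]$ written with the middle slice $S$ genuinely fixes the off-center centers and coupled edges; the cleanest route is to observe that $[R:S]$ and $F$ each individually return all non-single-edge cubies to their positions in the combination, or to build the $3$-cycle instead from a commutator of moves each of which is itself known to fix centers and coupled edges. A secondary subtlety is confirming that conjugation by $g\in\mathbf{G}$ preserves the property ``fixes all non-single-edge cubies and all orientations'': this is automatic because that property defines a normal-in-$\mathbf{G}$ condition (it is the kernel of the action on the other cubie data), so conjugates of elements of $\mathbf{E}_S$ stay in $\mathbf{E}_S$. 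Once these points are settled, the argument is a direct transcription of the corner case.
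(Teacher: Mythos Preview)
Your proposal is correct and follows essentially the same approach as the paper: both directions are proved exactly as you outline, using the move $m_{e_s}=[F,[R:S]]$ (with the central slice $S$) as the seed $3$-cycle and invoking condition~1 of Theorem~\ref{thrm:first_law_necessary_odd} for the upper bound. Your additional care about verifying that $m_{e_s}$ and its conjugates fix all non-single-edge cubies is more thorough than the paper's treatment, which simply asserts this.
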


\begin{proof}
    We proceed as we did for Theorem \ref{thrm:corners_only_subgroup}. First, we show $\mathcal{A}_{12} \leqslant \mathbf{E}_S$. We want to create a move that permutes elements in $\mathbf{E}_S$. 
    
    This is simplified if we allow slice rotations of the central slice. We can reconcile the issue of centers being fixed in our definition of the cube by considering such a move as a combination of all parallel slice rotations and then a shift in notation for following moves. As such, we denote the move $M$ as the rotation of the central slice between $L$ and $R$, $E$ between $U$ and $D$, and $S$ between $F$ and $B$. Figure \ref{fig:single_edges_dual_pair} shows a move equivalent to $M^2B^2M^2B^2$. 
    We see that the move
    \begin{equation}
        m_{e_s} = [F,[R:S]]
    \end{equation}
    
    is a 3-cycle on single edges for a $3 \times 3 \times 3$ Rubik's Cube (as shown in Figure \ref{fig:single_edges_3_cycle}) that holds for any $n \times n \times n$ Rubik's Cube. Since all 12 single edges are in the same orbit, for any there exists an element $g \in G$ that brings it to one of the cubies permuted by $m_{e_s}$, so $g \cdot m_{e_s} \cdot g^{-1}$ gives 3-cycles containing any single edges, which suffice to generate $\mathcal{A}_{12}$.
    
    Since there are 12 single edges, clearly $\mathbf{E}_S \leqslant \mathbf{S}_{12}$. By condition 1 of Theorem \ref{thrm:first_law_necessary_odd}, a permutation of single edges that does not permute corners must be even, so $\mathbf{E}_S \leqslant \mathcal{A}_{12}$, which gives $\mathbf{E}_S \cong \mathcal{A}_{12}$.
\end{proof}

\begin{theorem} \label{thrm:center_corners_only_subgroup}
    $\mathbf{Z}_{C_i \: odd} \cong \mathbf{Z}_{C_i \: even} \cong \mathcal{A}_{24}$
\end{theorem}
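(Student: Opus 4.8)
The plan is to mirror the proofs of Theorems~\ref{thrm:corners_only_subgroup} and~\ref{thrm:single_edges_only_subgroup} almost verbatim, since the structure of the argument -- exhibit a 3-cycle, conjugate it around the orbit to generate the alternating group, then use the first law to bound the subgroup from above -- carries over directly to center corners. Concretely, I would first note (as in the earlier proofs) that $\mathbf{Z}_{C_i}$ depends on $i$ only up to which internal slice we rotate, so it suffices to treat a single representative orbit of $24$ center corners.

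For the lower bound $\mathcal{A}_{24} \leqslant \mathbf{Z}_{C_i}$, I would construct an explicit move that acts as a 3-cycle on center corners in slice $i$ while fixing everything else. The natural candidate is a commutator of the form $[m, n]$ where $m$ is an internal slice rotation $R_i$ (or a short word in such) and $n$ is a face turn or another slice rotation chosen so that the two "halves" of the commutator overlap in exactly one center-corner cubie; this is the standard way to produce 3-cycles of centers on big cubes, and the paper has already set up commutator and conjugate notation for exactly this purpose. Concretely I expect something like $[R_i U R_i^{-1}, U]$ or $[[U: R_i], U]$-type words to do it after inspection of a $5\times 5\times 5$ (the smallest cube with moving center corners), analogous to how $m_c$ works on a $3\times3\times3$ and persists for all $n$. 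Once one such 3-cycle is in hand, I would invoke the transitivity of $\mathbf{G}$ on the center-corner orbit (any center corner can be brought to any position in its orbit by some $g \in \mathbf{G}$) and conjugate to obtain 3-cycles through every triple of center corners; since 3-cycles generate $\mathcal{A}_{24}$, this gives $\mathcal{A}_{24} \leqslant \mathbf{Z}_{C_i}$.

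For the upper bound, since there are $24$ center corners in the orbit, $\mathbf{Z}_{C_i} \leqslant \mathbf{S}_{24}$ trivially. By condition~1 of Theorem~\ref{thrm:first_law_necessary_odd} (resp.\ Theorem~\ref{thrm:first_law_necessary_even}), any valid configuration has $\mathrm{sgn}(\rho_{c_i}) = \mathrm{sgn}(\sigma)$, so a permutation that moves only center corners in slice $i$ -- hence fixes all actual corners, so $\sigma = \mathrm{id}$ and $\mathrm{sgn}(\sigma) = 1$ -- must be even. Thus $\mathbf{Z}_{C_i} \leqslant \mathcal{A}_{24}$, and combining the two bounds yields $\mathbf{Z}_{C_i\:odd} \cong \mathbf{Z}_{C_i\:even} \cong \mathcal{A}_{24}$; the statement for odd and even cubes is the same because the 3-cycle move and the sign argument are both insensitive to the parity of $n$ (the center-corner orbit always has size $24$).

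The main obstacle I anticipate is the explicit construction and verification of the center-corner 3-cycle: unlike corners and single edges, where the classical $3\times3\times3$ moves suffice, center corners first appear on the $5\times5\times5$, and one must check both that the chosen commutator is genuinely a single 3-cycle on one center-corner orbit (not, say, a product of two 3-cycles or a 3-cycle together with motion of center edges or coupled edges) and that its action is stable as $n$ grows so that internal slice rotations of deeper slices do not disturb it. A clean way to handle this is to pick a commutator $[a,b]$ where $a$ and $b$ each affect the target orbit in a 4-cycle and the supports of $a$ and $b$ on that orbit intersect in exactly one cubie -- a standard "intersecting 4-cycles give a 3-cycle" lemma -- and to choose $a,b$ supported on slices and faces that each touch the orbit's cubies but leave other orbits either untouched or returned to start by the commutator structure. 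I would present this with a small figure (in the style of Figure~\ref{fig:corners_single_edges_3_cycle}) illustrating the 3-cycle on a $5\times5\times5$, which makes the verification transparent.
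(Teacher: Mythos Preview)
Your proposal is correct and follows essentially the same approach as the paper: exhibit an explicit commutator 3-cycle on the center-corner orbit, conjugate it around to generate $\mathcal{A}_{24}$, then invoke condition~1 of Theorems~\ref{thrm:first_law_necessary_odd}/\ref{thrm:first_law_necessary_even} to force the permutation to be even and obtain the upper bound $\mathbf{Z}_{C_i} \leqslant \mathcal{A}_{24}$. The paper's concrete move is $m_{z_{i,j}} = [[R_j^{-1},D_i],F^{-1}]$ (taking $i=j$ for center corners), and one small correction: center corners already appear on the $4\times4\times4$, so that is the natural base case rather than the $5\times5\times5$.
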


\begin{theorem} \label{thrm:center_edges_only_subgroup}
    $\mathbf{Z}_{E_{i,j} \: odd} \cong \mathbf{Z}_{E_{i,j} \: even} \cong \mathcal{A}_{24}$
\end{theorem}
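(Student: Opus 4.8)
The plan is to mirror the proofs of Theorems~\ref{thrm:corners_only_subgroup} and~\ref{thrm:single_edges_only_subgroup}: for a fixed center-corner orbit (indexed by a slice $i$) and a fixed center-edge orbit (indexed by a pair $(i,j)$), establish the two inclusions $\mathcal{A}_{24} \leqslant \mathbf{Z}_{C_i}$ and $\mathbf{Z}_{C_i} \leqslant \mathcal{A}_{24}$, and likewise $\mathcal{A}_{24} \leqslant \mathbf{Z}_{E_{i,j}}$ and $\mathbf{Z}_{E_{i,j}} \leqslant \mathcal{A}_{24}$. As in the corner and single-edge cases the argument applies uniformly to every admissible $n$, so no separate treatment of odd and even $n$ is needed beyond the labelling of slices; and since each orbit has exactly $24$ cubies by Theorems~\ref{thrm:center_corners_subgroup_all_perms} and~\ref{thrm:center_edges_subgroup_all_perms}, the two inclusions together pin down the group.

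The easy inclusions use the first law. From the orbit sizes, $\mathbf{Z}_{C_i} \leqslant \mathbf{S}_{24}$ and $\mathbf{Z}_{E_{i,j}} \leqslant \mathbf{S}_{24}$. An element of $\mathbf{Z}_{C_i}$ fixes every corner, hence has $\sigma = \text{id}$, and condition~$1$ of Theorem~\ref{thrm:first_law_necessary_odd} (or of Theorem~\ref{thrm:first_law_necessary_even}) forces $\text{sgn}(\rho_{c_i}) = \text{sgn}(\sigma) = 1$; so $\mathbf{Z}_{C_i} \leqslant \mathcal{A}_{24}$. An element of $\mathbf{Z}_{E_{i,j}}$ fixes every corner and every coupled and single edge, so $\sigma$, $\tau_i$, and $\tau_j$ are all trivial, and condition~$2$ forces $\text{sgn}(\rho_{e_{i,j}}) = 1$; so $\mathbf{Z}_{E_{i,j}} \leqslant \mathcal{A}_{24}$.

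For the reverse inclusions I would exhibit, for each orbit, one move acting as a single $3$-cycle on that orbit and as the identity on every other cubie. The candidates have the same flavour as $m_c = [[R:U],D]$ and $m_{e_s} = [F,[R:S]]$ --- a commutator of an inner slice rotation with a quarter-turn conjugate of an inner slice rotation, the conjugation tilting one slice so that the supports of the two factors overlap in exactly one cubie of the target orbit, after which the extraneous motion of coupled edges and of centers in other orbits cancels in the commutator and only a $3$-cycle survives. For a center-corner orbit I would take $m_{z_{c_i}}$ to be such a commutator built from $R_i$ and a conjugate of $R_i$, and for a center-edge orbit $m_{z_{e_{i,j}}}$ an analogous commutator built from the two inner slice rotations indexed by $i$ and $j$; I would confirm the stated action by a direct check on the smallest cube carrying the orbit ($4 \times 4 \times 4$ for center corners, $5 \times 5 \times 5$ for center edges), displayed in a figure as in Figure~\ref{fig:corners_single_edges_3_cycle}. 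Since all $24$ cubies of the orbit lie in a single $\mathbf{G}$-orbit and that orbit is $\mathbf{G}$-invariant, for any center corner (resp.\ center edge) in it there is $g \in \mathbf{G}$ carrying some cubie moved by $m_{z_{c_i}}$ (resp.\ $m_{z_{e_{i,j}}}$) to it, so $g \cdot m_{z_{c_i}} \cdot g^{-1}$ (resp.\ $g \cdot m_{z_{e_{i,j}}} \cdot g^{-1}$) is again in $\mathbf{Z}_{C_i}$ (resp.\ $\mathbf{Z}_{E_{i,j}}$) and is a $3$-cycle through that cubie; these $3$-cycles suffice to generate $\mathcal{A}_{24}$. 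Combining the two inclusions gives $\mathbf{Z}_{C_i \: odd} \cong \mathbf{Z}_{C_i \: even} \cong \mathcal{A}_{24}$ and $\mathbf{Z}_{E_{i,j} \: odd} \cong \mathbf{Z}_{E_{i,j} \: even} \cong \mathcal{A}_{24}$.

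I expect the construction and verification of these pure $3$-cycles to be the main obstacle. The center-edge orbits carry a chirality/quadrant label, and for odd $n$ with $j = \frac{n+1}{2}$ such an orbit sits next to the single edges, so the commutator must be chosen so as not to leak into a parallel center orbit of a different slice index, into a center-edge orbit of the opposite handedness, or into the single edges. I would control this by arranging that each of the two legs of the commutator is supported only on the slice or slices involved together with the outer faces it borrows, so that every cubie outside the intended orbit is touched twice with opposite effect and cancels; tracking this cancellation cubie by cubie on the small cube, plus the uniformity noted above, then yields the statement for all $n$.
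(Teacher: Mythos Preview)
Your proposal is correct and follows essentially the same approach as the paper: the upper bound via conditions~1 and~2 of the first law, the lower bound via an explicit $3$-cycle plus conjugation to generate $\mathcal{A}_{24}$. The paper's concrete move is the single commutator $m_{z_{i,j}} = [[R_j^{-1},D_i],F^{-1}]$, which handles center corners ($i=j$) and center edges ($i\neq j$) uniformly and uses the outer face $F$ rather than only inner slices; it also covers your worry about the central-slice case by substituting $M$ for $R_{(n+1)/2}$.
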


\begin{proof}
    We proceed as we did for Theorem \ref{thrm:corners_only_subgroup}. First, we show $\mathcal{A}_{24} \leqslant \mathbf{Z}_{C_i}$ and $\mathcal{A}_{24} \leqslant \mathbf{Z}_{E_{i,j}}$. We see that the move
    \begin{equation}
        m_{z_{c_2}} = [[R_2^{-1},D_2],F^{-1}]
    \end{equation}
    
    is a 3-cycle on center corners in the second slice of an $n \times n \times n$ Rubik's Cube, as shown in Figure \ref{fig:center_corners_3_cycle}. We generalize this to move
    \begin{equation}
        m_{z_{i,j}} = [[R_j^{-1},D_i],F^{-1}]
    \end{equation}
    
    which gives a 3-cycle on centers in slice $i$ of an $n \times n \times n$ Rubik's Cube. We note that when $i = j$, this is a 3-cycle on center corners and when $i \neq j$ on center edges, as shown in Figure \ref{fig:center_edges_3_cycle}. Also, we substitute $M$ for $R_{\frac{n+1}{2}}$ to give a 3-cycle on center edges whose orbit is in the central slice\footnote{This move with center slice turns can serve as a replacement for the incorrect move $w$ on the $5 \times 5 \times 5$ Rubik's Cube in Theorem 13 of \cite{bonzio_loi_peruzzi_2018}, which permuted center corners instead of center edges}. Again, by a move $g \cdot m_{z_{i,j}} \cdot g^{-1}$, we 3-cycle centers in the same orbit, giving $\mathcal{A}_{24} \leqslant \mathbf{Z}_{C_i}$ and $\mathcal{A}_{24} \leqslant \mathbf{Z}_{E_{i,j}}$.
    
    Since there are 24 centers in any orbit, clearly $\mathbf{Z}_{C_i} \leqslant \mathbf{S}_{24}$ and $\mathbf{Z}_{E_{i,j}} \leqslant \mathbf{S}_{24}$. 
    
    By condition 1 of Theorem \ref{thrm:first_law_necessary_odd} and condition 1 of Theorem \ref{thrm:first_law_necessary_even}, a permutation of center corners that does not permute corners must be even, so $\mathbf{Z}_{C_i} \leqslant \mathcal{A}_{24}$, which gives $\mathbf{Z}_{C_i \: odd} \cong \mathbf{Z}_{C_i \: even} \cong \mathcal{A}_{24}$.  
    
    By condition 2 of Theorem \ref{thrm:first_law_necessary_odd} and condition 2 of Theorem \ref{thrm:first_law_necessary_even}, a permutation of center edges that does not permute corners or coupled edges must be even, which means $\mathbf{Z}_{E_{i,j}} \leqslant \mathcal{A}_{24}$, so $\mathbf{Z}_{E_{i,j} \: odd} \cong \mathbf{Z}_{E_{i,j} \: even} \cong \mathcal{A}_{24}$\footnote{This corrects the subgroup of center edges for a $6 \times 6 \times 6$ Rubik's Cube from Theorem 17 of \cite{bonzio_loi_peruzzi_2018}}.
\end{proof}

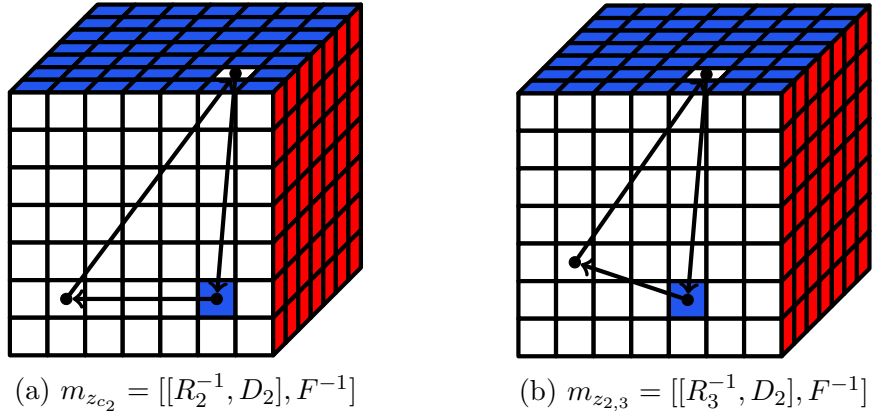
\begin{figure}[t]
    \centering
    \begin{subfigure}[b]{.4\textwidth}
        \centering
        \begin{tikzpicture}[scale=.5]
            \readarray{Front}{%
                W&W&W&W&W&W&W&%
                W&W&W&W&W&W&W&%
                W&W&W&W&W&W&W&%
                W&W&W&W&W&W&W&%
                W&W&W&W&W&W&W&%
                W&W&W&W&W&B&W&%
                W&W&W&W&W&W&W&%
            }
            \readarray{Right}{%
                R&R&R&R&R&R&R&%
                R&R&R&R&R&R&R&%
                R&R&R&R&R&R&R&%
                R&R&R&R&R&R&R&%
                R&R&R&R&R&R&R&%
                R&R&R&R&R&R&R&%
                R&R&R&R&R&R&R&%
            }
            \readarray{Top}{%
                B&B&B&B&B&B&B&%
                B&B&B&B&B&B&B&%
                B&B&B&B&B&B&B&%
                B&B&B&B&B&B&B&%
                B&B&B&B&B&B&B&%
                B&B&B&B&B&W&B&%
                B&B&B&B&B&B&B&%
            }
            \cube{7}
            \node[draw, shape=circle, fill=black, minimum size=1.5mm, inner sep=0] (a) at (1.5,1.5) {};
            \node[draw, shape=circle, fill=black, minimum size=1.5mm, inner sep=0] (b) at (5.5+1/3+1/6,7+1/3+1/6) {};
            \node[draw, shape=circle, fill=black, minimum size=1.5mm, inner sep=0] (c) at (5.5,1.5) {};
            \draw[->, line join=round,line cap=round,ultra thick] (a) -> (b);
            \draw[->, line join=round,line cap=round,ultra thick] (b) -> (c);
            \draw[->, line join=round,line cap=round,ultra thick] (c) -> (a);
        \end{tikzpicture}\\
        \caption{$m_{z_{c_2}} = [[R_2^{-1},D_2],F^{-1}]$}
        \label{fig:center_corners_3_cycle}
    \end{subfigure}
    \hspace{1 cm}
    \begin{subfigure}[b]{.4 \textwidth}
        \centering
        \begin{tikzpicture}[scale=.5]
            \readarray{Front}{%
                W&W&W&W&W&W&W&%
                W&W&W&W&W&W&W&%
                W&W&W&W&W&W&W&%
                W&W&W&W&W&W&W&%
                W&W&W&W&W&W&W&%
                W&W&W&W&B&W&W&%
                W&W&W&W&W&W&W&%
            }
            \readarray{Right}{%
                R&R&R&R&R&R&R&%
                R&R&R&R&R&R&R&%
                R&R&R&R&R&R&R&%
                R&R&R&R&R&R&R&%
                R&R&R&R&R&R&R&%
                R&R&R&R&R&R&R&%
                R&R&R&R&R&R&R&%
            }
            \readarray{Top}{%
                B&B&B&B&B&B&B&%
                B&B&B&B&B&B&B&%
                B&B&B&B&B&B&B&%
                B&B&B&B&B&B&B&%
                B&B&B&B&B&B&B&%
                B&B&B&B&W&B&B&%
                B&B&B&B&B&B&B&%
            }
            \cube{7}
            \node[draw, shape=circle, fill=black, minimum size=1.5mm, inner sep=0] (a) at (4.5,1.5) {};
            \node[draw, shape=circle, fill=black, minimum size=1.5mm, inner sep=0] (b) at (1.5,2.5) {};
            \node[draw, shape=circle, fill=black, minimum size=1.5mm, inner sep=0] (c) at (5,7.5) {};
            \draw[->, line join=round,line cap=round,ultra thick] (a) -> (b);
            \draw[->, line join=round,line cap=round,ultra thick] (b) -> (c);
            \draw[->, line join=round,line cap=round,ultra thick] (c) -> (a);
        \end{tikzpicture}\\
        \caption{$m_{z_{2,3}} = [[R_3^{-1},D_2],F^{-1}]$}
        \label{fig:center_edges_3_cycle}
    \end{subfigure}
    \caption{3-cycles for centers}
    \label{fig:centers_3_cycle}
\end{figure}

\begin{theorem} \label{thrm:coupled_edges_only_subgroup}
    $\mathbf{E}_{C_i \: odd} \cong \mathcal{A}_{24}$, $\mathbf{E}_{C_i \: even} \cong \mathcal{S}_{24}$
\end{theorem}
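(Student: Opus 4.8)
The plan is to follow the pattern of Theorems~\ref{thrm:corners_only_subgroup}--\ref{thrm:center_edges_only_subgroup}: bound $\mathbf{E}_{C_i}$ below by exhibiting a move that acts as a single $3$-cycle of coupled edges in slice $i$ and fixes every other cubie, conjugate it to obtain all $3$-cycles on that orbit, and bound $\mathbf{E}_{C_i}$ above using the first law of cubology. One inclusion is immediate: by Theorem~\ref{thrm:coupled_edges_subgroup_all_perms} the orientation of a coupled edge is fixed by its position, so an element of $\mathbf{E}_{C_i}$ is recorded by the permutation $\tau_{c_i}$ alone and $\mathbf{E}_{C_i}\leqslant\mathbf{S}_{24}$.

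For the lower bound $\mathcal{A}_{24}\leqslant\mathbf{E}_{C_i}$, which is needed for every $n$, I would write down a conjugated commutator of slice rotations $m_{e_{c_i}}$, in the same spirit as the moves $m_c$, $m_{e_s}$ and $m_{z_{i,j}}$ above. Since a commutator of two moves acts only on cubies moved by both, an appropriate $m_{e_{c_i}}$ $3$-cycles exactly three coupled edges of slice $i$ and fixes all corners, single edges, other coupled-edge orbits, center corners and center edges; the verification reduces to inspecting the two constituent moves on their small overlap, as in Figures~\ref{fig:corners_3_cycle}--\ref{fig:center_edges_3_cycle}. All $24$ coupled edges of slice $i$ lie in one $\mathbf{G}$-orbit, so conjugating $m_{e_{c_i}}$ by a suitable $g\in\mathbf{G}$ gives a $3$-cycle on any prescribed triple of coupled edges in slice $i$, and such $3$-cycles generate $\mathcal{A}_{24}$.

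For the upper bound in the odd case, let $\tau_{c_i}\in\mathbf{E}_{C_i}$; then $\sigma$, $\tau_s$, every $\tau_{c_k}$ with $k\neq i$, and every center-edge permutation are trivial. Applying condition 2 of Theorem~\ref{thrm:first_law_necessary_odd} to the center-edge orbit with indices $(i,\frac{n+1}{2})$, whose second coordinate is the central slice so that the associated edge permutation is $\tau_s$, yields $1=\text{sgn}(\rho_{e_{i,(n+1)/2}})=\text{sgn}(\sigma)\,\text{sgn}(\tau_{c_i})\,\text{sgn}(\tau_s)=\text{sgn}(\tau_{c_i})$, so $\mathbf{E}_{C_i \: odd}\leqslant\mathcal{A}_{24}$ and hence $\mathbf{E}_{C_i \: odd}\cong\mathcal{A}_{24}$. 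In the even case there is no single-edge orbit to enforce this, and I would instead exhibit a move $w$ acting as a single transposition of two coupled edges in slice $i$ and as the identity on all other cubies: for $n=4$ a conjugate of the classical $4\times4\times4$ ``OLL parity'' algorithm, adapted by conjugation for larger even $n$. A transposition is odd, so $w$ shows that $\mathbf{E}_{C_i \: even}$ is not contained in $\mathcal{A}_{24}$; together with $\mathcal{A}_{24}\leqslant\mathbf{E}_{C_i}\leqslant\mathbf{S}_{24}$ and $[\mathbf{S}_{24}:\mathcal{A}_{24}]=2$, this gives $\mathbf{E}_{C_i \: even}\cong\mathbf{S}_{24}$.

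The main obstacle is the even case. Unlike $m_{e_{c_i}}$, which is a short localised commutator, a move transposing exactly two coupled edges in slice $i$ while disturbing no other cubie, in particular no center edge, is genuinely delicate to build, and a careful treatment of how condition 2 of Theorem~\ref{thrm:first_law_necessary_even} constrains the achievable parity patterns across coupled-edge orbits is the real content of the proof, not something one can dispatch by an appeal to locality. A secondary, lighter point is to argue cleanly, via the locality of commutators rather than by enumerating orbits, that $m_{e_{c_i}}$ leaves untouched all $O(n^2)$ center-edge orbits present on a large cube.
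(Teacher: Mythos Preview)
Your proposal is correct and follows essentially the same approach as the paper. The paper uses the explicit $3$-cycle $m_{e_{c_i}} = [[F^{-1},U],D_i]$ for the lower bound $\mathcal{A}_{24}\leqslant\mathbf{E}_{C_i}$, and for the even case gives the explicit odd-permutation move $(R^2R_2^2B^2,\,U^2L_2U^2R_2^{-1}U^2R_2U^2F^2R_2F^2L_2^{-1})$, which is precisely a form of the ``OLL parity'' algorithm you allude to.

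One point worth noting: your odd-case upper bound is actually more careful than the paper's. The paper simply says ``by condition~2 \ldots\ a permutation of coupled edges that does not permute corners or center edges must be even,'' but for $n=5$ the only center-edge orbit involving slice $i=2$ is $(2,3)$, where the second index is the central slice, so one is forced to read $\tau_{(n+1)/2}$ as $\tau_s$ exactly as you do. Your explicit choice of the orbit $(i,\tfrac{n+1}{2})$ thus fills in a step the paper leaves implicit and makes the argument uniform across all odd $n\geq 5$.
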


\begin{proof}
    Once again, we first show $\mathcal{A}_{24} \leqslant \mathbf{E}_{C_i}$. We see that the move
    \begin{equation}
        m_{e_{c_2}} = [[F^{-1},U],D_2]
    \end{equation}
    
    \begin{figure}[ht]
        \centering
        \begin{subfigure}[b]{.4\textwidth}
            \centering
            \begin{tikzpicture}[scale=.8]
                \readarray{Front}{%
                    W&W&W&W&%
                    W&W&W&W&%
                    R&W&W&R&%
                    W&W&W&W&%
                }
                \readarray{Right}{%
                    R&R&O&R&%
                    R&R&R&R&%
                    B&R&R&R&%
                    R&R&R&R&%
                }
                \readarray{Top}{%
                    B&B&B&B&%
                    B&B&B&W&%
                    B&B&B&B&%
                    B&B&B&B&%
                }
                \cube{4}
                \node[draw, shape=circle, fill=black, minimum size=2mm, inner sep=0] (a) at (.5,1.5) {};
                \node[draw, shape=circle, fill=black, minimum size=2mm, inner sep=0] (b) at (4+5/6,3.5+5/6) {};
                \node[draw, shape=circle, fill=black, minimum size=2mm, inner sep=0] (c) at (3.5,1.5) {};
                \draw[->, line join=round,line cap=round,ultra thick] (a) -> (b);
                \draw[->, line join=round,line cap=round,ultra thick] (b) -> (c);
                \draw[->, line join=round,line cap=round,ultra thick] (c) -> (a);
            \end{tikzpicture}\\
            \caption{$m = [[F^{-1},U],D_2]$}
            \label{fig:coupled_edges_3_cycle}
        \end{subfigure}
        \hspace{1 cm}
        \begin{subfigure}[b]{.5 \textwidth}
            \centering
            \begin{tikzpicture}[scale=.8]
                \readarray{Front}{%
                    W&B&B&W&%
                    W&W&W&W&%
                    W&W&W&W&%
                    W&W&W&W&%
                }
                \readarray{Right}{%
                    R&R&R&R&%
                    R&R&R&R&%
                    R&R&R&R&%
                    R&R&R&R&%
                }
                \readarray{Top}{%
                    B&B&B&B&%
                    B&B&B&B&%
                    B&B&B&B&%
                    B&W&W&B&%
                }
                \cube{4}
            \end{tikzpicture}\\
            \caption{An odd permutation in $\mathbf{E}_{C_i \: even}$}
            \label{fig:odd_perm_for_coupled_edges}
        \end{subfigure}
        \caption{Selected permutations in $\mathbf{E}_{C_i}$}
        \label{fig:coupled_edges_cycles}
    \end{figure}
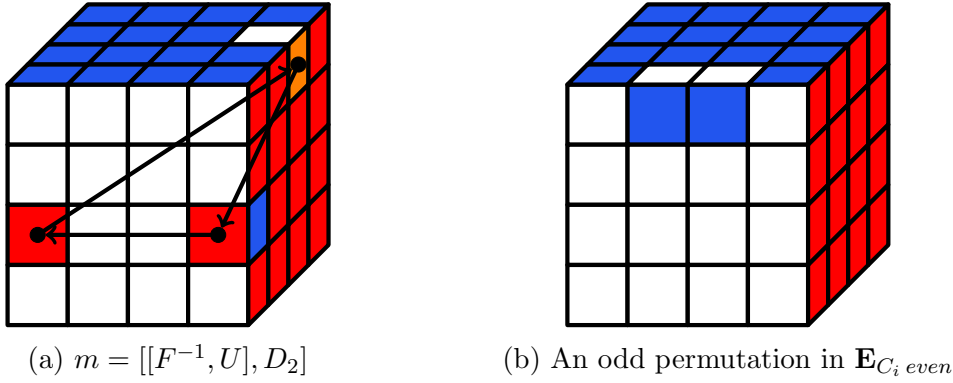
    
    is a 3-cycle on coupled edges in the second slice of an $n \times n \times n$ Rubik's Cube, as shown in Figure \ref{fig:coupled_edges_3_cycle}. We generalize this to move
    \begin{equation}
        m_{e_{c_i}} = [[F^{-1},U],D_i]
    \end{equation}
    
    which gives a 3-cycle on centers in slice $i$ of an $n \times n \times n$ Rubik's Cube. By a move $g \cdot m_{e_{c_i}} \cdot g^{-1}$, we 3-cycle coupled edges in the same orbit, giving $\mathcal{A}_{24} \leqslant \mathbf{E}_{C_i}$.
    
    Since there are 24 coupled edges in any orbit, clearly $\mathbf{E}_{C_i} \leqslant \mathbf{S}_{24}$. 
    
    For odd cubes, by condition 2 of Theorem \ref{thrm:first_law_necessary_odd}, a permutation of coupled edges that does not permute corners or center edges must be even, so $\mathbf{E}_{C_i \: odd} \leqslant \mathcal{A}_{24}$, which gives $\mathbf{E}_{C_i \: odd} \cong \mathcal{A}_{24}$. 
    
    For even cubes, we can find a move that induces an odd permutation of center edges. For a $4 \times 4 \times 4$ or $5 \times 5 \times 5$ Rubik's Cube, one such move is $m=(R^2R_2^2B^2,U^2L_2U^2R_2^{-1}U^2R_2U^2F^2R_2F^2L_2^{-1})$ (shown in Figure \ref{fig:odd_perm_for_coupled_edges}), which can be generalized to create an odd permutation for orbit $i$ of an $n \times n \times n$ Rubik's Cube by substituting $R^2R_2^2\dots R_i^2$ for $R^2R_2^2$, $R_i$ for $R_2$, and $L_i$ for $L_2$ and then combining such moves. Therefore, from this odd permutation and $\mathcal{A}_{24}$, we can generate all odd permutations, so $\mathbf{E}_{C_i \: even} \cong \mathcal{S}_{24}$.
\end{proof}

\begin{theorem} \label{thrm:corners_orient_subgroup}
    $\mathbf{O}_{C} \cong \mathbb{Z}_{3}^7$
\end{theorem}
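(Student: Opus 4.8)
The plan is to trap $\mathbf{O}_C$ between an explicit subgroup of $\mathbb{Z}_3^8$ that it obviously sits inside and the same subgroup realized by explicit moves. Every element of $\mathbf{O}_C$ acts trivially on all cubie permutations and on the orientations of single and coupled edges, so it is completely described by its corner orientation vector $x \in \mathbb{Z}_3^8$. Because the corner permutation is trivial, composing two elements of $\mathbf{O}_C$ just adds their orientation vectors (the effect of an orientation-only move on a corner it leaves in place is to add a fixed element of $\mathbb{Z}_3$), so $x:\mathbf{O}_C\to\mathbb{Z}_3^8$ is an injective group homomorphism, and it suffices to identify its image.

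First I would establish the upper bound. By condition $3$ of Theorem~\ref{thrm:first_law_necessary_odd} (for odd $n$) and condition $3$ of Theorem~\ref{thrm:first_law_necessary_even} (for even $n$), every valid configuration has $\sum_{i} x_i = 0$. Hence $x(\mathbf{O}_C)\leqslant H$, where $H := \{\, x\in\mathbb{Z}_3^8 : \sum_{i} x_i = 0 \,\}$ is the kernel of the coordinate-sum homomorphism $\mathbb{Z}_3^8\to\mathbb{Z}_3$; thus $|H| = 3^7$ and $H\cong\mathbb{Z}_3^7$.

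For the reverse inclusion I would exhibit a single \emph{double twist}: a move $m_{o_c}$, written using only outer face turns (so that its effect on corners is identical on every $n\times n\times n$ cube), which raises the orientation of one corner by $1$, lowers the orientation of a second corner by $1$, and acts as the identity on every other sticker — concretely a short conjugated commutator of $F,B,R,L,U,D$ of the same flavour as the moves $m_c$ and $m_{e_s}$ of Theorems~\ref{thrm:corners_only_subgroup} and~\ref{thrm:single_edges_only_subgroup}, verified on the $3\times3\times3$ cube. Writing $e_k$ for the standard basis vectors of $\mathbb{Z}_3^8$, this move realizes $\pm(e_1-e_2)\in x(\mathbf{O}_C)$. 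Now I would conjugate: by Theorem~\ref{thrm:corners_only_subgroup}, $\mathbf{C}\cong\mathcal{A}_8$, which is $2$-transitive on the eight corner positions, so for any $i\neq j$ there is $g\in\mathbf{C}$ sending the two twisted positions to $\{i,j\}$. Since $g$ fixes every non-corner cubie and permutes corners only, $g\,m_{o_c}\,g^{-1}$ again lies in $\mathbf{O}_C$, has trivial corner permutation, and is supported on positions $\{i,j\}$; its orientation vector is thus a nonzero element of $H$ supported on $\{i,j\}$ (nonzero because $g\,m_{o_c}\,g^{-1}$ has order $3$, like $m_{o_c}$), hence equal to $\pm(e_i-e_j)$. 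So $\pm(e_i-e_j)\in x(\mathbf{O}_C)$ for all $i\neq j$. The seven vectors $e_1-e_2,\dots,e_1-e_8$ are linearly independent over $\mathbb{Z}_3$ and lie in the elementary abelian group $H$ of order $3^7$, so they generate $H$; therefore $x(\mathbf{O}_C)=H$ and $\mathbf{O}_C\cong\mathbb{Z}_3^7$.

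The main obstacle is producing and verifying the pure double-twist move $m_{o_c}$: one must give a word in the outer face turns whose net effect twists exactly two corners oppositely while disturbing nothing else — no edges, no centers, no corner positions — and observe that, being built only from outer-layer moves, it behaves identically for all $n$. Everything else (the homomorphism property, the upper bound from the first law, and the conjugation-and-spanning argument) is routine once $m_{o_c}$ is in hand.
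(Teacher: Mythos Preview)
Your approach is essentially the same as the paper's: bound $\mathbf{O}_C$ above by the sum-zero hyperplane in $\mathbb{Z}_3^8$ via condition~3 of the first law, exhibit a pure double-twist move, and conjugate to generate the whole hyperplane. The paper supplies the explicit move you leave unspecified, namely $m_{o_c} = [[F,L^{-1}]^2,U]$, and otherwise argues more tersely (using arbitrary $g\in\mathbf{G}$ rather than your cleaner appeal to the $2$-transitivity of $\mathbf{C}\cong\mathcal{A}_8$); your write-up is in fact the more careful of the two.
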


\begin{proof}
    First, we show $\mathbb{Z}_{3}^7 \leqslant \mathbf{O}_{C}$
    We see that the move
    \begin{equation}
        m_{o_c} = [[F,L']^2,U]
    \end{equation}
    
    changes the orientation of 2 corners for any $n \times n \times n$ Rubik's Cube, as shown in Figure \ref{fig:corner_orientation_swap}. Again, for some $g \in G$, $g \cdot m_c \cdot g^{-1}$ can change the orientation of 2 corners $\mod 3$, which generates $\mathbb{Z}_{3}^7$.
    
    Since there are 8 corners, clearly $\mathbf{O}_{C} \leqslant \mathbb{Z}_{3}^8$. By condition 3 of Theorem \ref{thrm:first_law_necessary_odd} and Theorem \ref{thrm:first_law_necessary_even}, $\mathbf{O}_{C} < \mathbb{Z}_{3}^8$, so $\mathbb{Z}_{3}^7 \leqslant \mathbf{O}_{C}$
\end{proof}

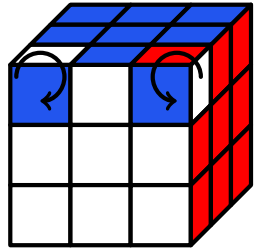
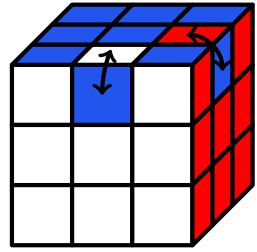
\begin{figure}[ht]
    \centering
    \begin{subfigure}[b]{.3\textwidth}
        \centering
        \begin{tikzpicture}[scale=.8]
            \readarray{Front}{%
                B&W&B&%
                W&W&W&%
                W&W&W&%
            }
            \readarray{Right}{%
                W&R&R&%
                R&R&R&%
                R&R&R&%
            }
            \readarray{Top}{%
                B&B&B&%
                B&B&B&%
                W&B&R&%
            }
            \cube{3}
            \draw[->, line join=round,line cap=round,ultra thick] (0.1,2.8) arc (180:-90:.4);
            \draw[->, line join=round,line cap=round,ultra thick] (3+1/6,2.8) arc (0:270:.4);
        \end{tikzpicture}\\
        \caption{$m_{o_c} = [[F,L']^2,U]$}
        \label{fig:corner_orientation_swap}
    \end{subfigure}
    \hspace{1 cm}
    \begin{subfigure}[b]{.4\textwidth}
        \centering
        \begin{tikzpicture}[scale=.8]
            \readarray{Front}{%
                W&B&W&%
                W&W&W&%
                W&W&W&%
            }
            \readarray{Right}{%
                R&B&R&%
                R&R&R&%
                R&R&R&%
            }
            \readarray{Top}{%
                B&B&B&%
                B&B&R&%
                B&W&B&% 
            }
            \cube{3}
            \draw[<->, line join=round,line cap=round,ultra thick] (1.5,2.5) arc (180:158:2);
            \draw[<->, line join=round,line cap=round,ultra thick] (2.9,3.5) arc (90:0:0.6);
        \end{tikzpicture}\\
        \caption{$m_{o_e} = [FEF^2E^2F,U]$}
        \label{fig:edge_orientation_swap}
    \end{subfigure}
    \caption{Orientation swaps for corners and single edges}
    \label{fig:orientation_swaps}
\end{figure}

\begin{theorem} \label{thrm:edges_orient_subgroup}
    $\mathbf{O}_{E} \cong \mathbb{Z}_{2}^{11}$
\end{theorem}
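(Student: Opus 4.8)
The plan is to mirror the proof of Theorem~\ref{thrm:corners_orient_subgroup}: establish the two inclusions $\mathbb{Z}_2^{11} \leqslant \mathbf{O}_E$ and $\mathbf{O}_E \leqslant \mathbb{Z}_2^{11}$, where as usual we identify $\mathbf{O}_E$ with its group of admissible single-edge orientation vectors under addition. First I would note that single edges exist only on odd cubes, so $\mathbf{O}_E$ is the same subgroup for every odd $n$; this follows because, as in Theorem~\ref{thrm:single_edges_only_subgroup}, single edges are affected only by outer face turns (and by the notational device of the central slice $E$), never by genuine internal slice rotations.

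For the easy inclusion, there are $12$ single edges, so $\mathbf{O}_E \leqslant \mathbb{Z}_2^{12}$; condition~$4$ of Theorem~\ref{thrm:first_law_necessary_odd} says every achievable orientation vector $z$ satisfies $\sum_i z_i = 0$, so $\mathbf{O}_E$ is contained in the index-$2$ even-weight subgroup of $\mathbb{Z}_2^{12}$, which is isomorphic to $\mathbb{Z}_2^{11}$. Hence $\mathbf{O}_E \leqslant \mathbb{Z}_2^{11}$.

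For the reverse inclusion, I would exhibit the move $m_{o_e} = [FEF^2E^2F, U]$ and check, as illustrated in Figure~\ref{fig:edge_orientation_swap}, that on the $3 \times 3 \times 3$ cube (and hence on any odd $n \times n \times n$ cube) it flips the orientation of exactly two single edges while fixing the position of every cubie and the orientation of every other cubie; in particular its image lies in $\mathbf{O}_E$. Then, because $\mathbf{E}_S \cong \mathcal{A}_{12}$ acts $2$-transitively on the $12$ single-edge positions, for any pair of single-edge positions $\{a,b\}$ there is $g \in \mathbf{G}$ carrying the two positions flipped by $m_{o_e}$ onto $\{a,b\}$; the conjugate $g\, m_{o_e}\, g^{-1}$ then lies in $\mathbf{O}_E$ and flips precisely the single edges at $a$ and $b$. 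The vectors $e_a + e_b$ over all pairs generate the even-weight subgroup of $\mathbb{Z}_2^{12}$, i.e. a copy of $\mathbb{Z}_2^{11}$, so $\mathbb{Z}_2^{11} \leqslant \mathbf{O}_E$. Combining the two inclusions gives $\mathbf{O}_E \cong \mathbb{Z}_2^{11}$.

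The main obstacle is the verification that $m_{o_e}$ does exactly what is claimed — that the structure $[FEF^2E^2F, U]$ cancels all cubie permutation and all orientation change except for the flip of two single edges — and that this persists on larger odd cubes, where the central-slice move $E$ a priori also disturbs coupled edges and center edges in the middle layer; one must argue (or check directly) that these contributions cancel inside the commutator, exactly as was done implicitly for $m_{e_s}$ in Theorem~\ref{thrm:single_edges_only_subgroup}. Everything else is the same bookkeeping as in Theorem~\ref{thrm:corners_orient_subgroup}.
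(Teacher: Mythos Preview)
Your proposal is correct and follows essentially the same approach as the paper: exhibit the move $m_{o_e} = [FEF^2E^2F,\,U]$ flipping exactly two single edges, conjugate to reach any pair and hence generate the even-weight subgroup $\mathbb{Z}_2^{11}$, and bound from above using condition~4 of Theorem~\ref{thrm:first_law_necessary_odd}. Your version is in fact more careful than the paper's (which contains several typos), since you explicitly invoke the $2$-transitivity of $\mathcal{A}_{12}$ and flag the verification that $m_{o_e}$ leaves other cubies undisturbed on larger odd cubes.
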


\begin{proof}
    We show $\mathbb{Z}_{2}^{11} \leqslant \mathbf{O}_{C}$
    We see that the move
    \begin{equation}
        m_{o_e} = [FEF^2E^2F,U]
    \end{equation}
    
    changes the orientation of 2 single edges, as shown in Figure \ref{fig:edge_orientation_swap}. For some $g \in G$, $g \cdot m_c \cdot g^{-1}$ can change the orientation of 2 single edges $\mod 2$, which generates $\mathbb{Z}_{2}^{11}$.
    
    Since there are 12 corners, clearly $\mathbf{O}_{C} \leqslant \mathbb{Z}_{2}^{11}$. By condition 4 of Theorem \ref{thrm:first_law_necessary_odd}, $\mathbf{O}_{C} < \mathbb{Z}_{2}^{12}$, so $\mathbf{O}_{C} \cong \mathbb{Z}_{2}^{11}$
\end{proof}

Now, we can prove the sufficient condition of the first law of cubology:

\begin{theorem} \label{thrm:first_law_sufficient_odd}
    Any configuration $(\sigma, \tau_s, \tau_{c_i}, \rho_{c_i}, \rho_{e_{i,j}}, x, y_i, z)$ of an odd $n \times n \times n$ is valid if:
    \begin{thmlist}
        \item $\text{sgn}(\sigma) = \text{sgn}(\tau_s) = \text{sgn}(\rho_{c_i})$
        \item $\text{sgn}(\rho_{e_{i,j}}) = \text{sgn}(\sigma)\text{sgn}(\tau_i)\text{sgn}(\tau_j)$
        \item $\sum_i x_i = 0$
        \item $\sum_i z_i = 0$
        \item $y_i = 0$
    \end{thmlist}
\end{theorem}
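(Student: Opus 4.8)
The plan is to prove that every configuration $s = (\sigma, \tau_s, \tau_{c_i}, \rho_{c_i}, \rho_{e_{i,j}}, x, y_i, z)$ obeying conditions $1$--$5$ can be carried to the initial configuration by a finite sequence of slice rotations; since each such move is invertible in $\mathbf{M}_n$, this exhibits $s$ in the orbit of the solved state and hence as valid. The work splits into two phases: first use face turns and internal slice rotations to reduce to the case in which \emph{every} component permutation is even, and then eliminate the configuration one cubie type at a time using the pure subgroups from Theorems \ref{thrm:corners_only_subgroup}--\ref{thrm:edges_orient_subgroup}.

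For the first phase I would argue as follows. If $\mathrm{sgn}(\sigma) = -1$ I apply one face turn, say $R$. Reading off its cycle structure from the proof of Theorem \ref{thrm:first_law_necessary_odd}, $R$ is an odd permutation on corners, on single edges, on every center-corner orbit and on every center-edge orbit, and an even permutation on every coupled-edge orbit, while it fixes $\sum_i x_i$, $\sum_i z_i$ and each $y_i$; a short check shows conditions $1$--$5$ survive and now $\mathrm{sgn}(\sigma) = +1$, so by condition $1$ also $\mathrm{sgn}(\tau_s) = \mathrm{sgn}(\rho_{c_i}) = +1$. Next, whenever a coupled-edge orbit $i$ still has $\mathrm{sgn}(\tau_i) = -1$, I apply a single internal rotation of slice $i$: it acts trivially on corners and single edges, evenly on each center-corner orbit, and oddly on coupled-edge orbit $i$ and on precisely the center-edge orbits carrying the index $i$, so conditions $1$ and $3$--$5$ are untouched and the two sides of condition $2$ flip in step for every affected orbit. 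After all these corrections every permutation is even, and condition $2$ then forces every $\rho_{e_{i,j}}$ to be even as well.

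In the second phase the current configuration has all permutations even, orientation sums $0$, and $y_i = 0$, so I dispose of it cubie type by cubie type: an element of $\mathbf{C} \cong \mathcal{A}_8$ to correct the corner permutation, then an element of $\mathbf{O}_C \cong \mathbb{Z}_3^7$ (the coordinate-sum-zero subgroup of $\mathbb{Z}_3^8$, which by condition $3$ contains $x$) to correct the corner orientations; then an element of $\mathbf{E}_S \cong \mathcal{A}_{12}$ and one of $\mathbf{O}_E \cong \mathbb{Z}_2^{11}$ (which contains $z$ by condition $4$) for the single edges; then, for each center-corner, center-edge and coupled-edge orbit in turn, an element of $\mathbf{Z}_{C_i}$, $\mathbf{Z}_{E_{i,j}}$, $\mathbf{E}_{C_i}$ respectively (all $\cong \mathcal{A}_{24}$ for odd $n$), the coupled edges needing no orientation fix by condition $5$. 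Every one of these subgroups lies in $\mathbf{G}$ and, by its defining property, acts as the identity on every cubie type but its own (and, in the orientation cases, preserves permutations), so nothing already solved gets disturbed. The configuration is then the initial one, which completes the argument.

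The main obstacle is the bookkeeping in the first phase: one must confirm that the two kinds of parity-correcting move preserve \emph{all five} conditions at once --- condition $2$ is the ticklish one, since it couples the center-edge parities simultaneously to the corner parity (changed by the face turn) and to the coupled-edge parities (changed by the internal rotations) --- and that once conditions $1$ and $2$ hold one really can drive every parity to be even, i.e.\ that no hidden parity invariant remains. Once that is settled, the second phase is purely a matter of invoking the already-established identifications of the pure subgroups and their pairwise-disjoint supports.
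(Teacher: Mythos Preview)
Your proposal is correct and follows essentially the same two-phase strategy as the paper: first apply a face turn (the paper uses $F$, you use $R$) to force $\mathrm{sgn}(\sigma)=+1$ and internal slice rotations $F_i$ to force each $\mathrm{sgn}(\tau_i)=+1$, then invoke the pure subgroups $\mathbf{C},\mathbf{E}_S,\mathbf{E}_{C_i},\mathbf{Z}_{C_i},\mathbf{Z}_{E_{i,j}},\mathbf{O}_C,\mathbf{O}_E$ one at a time to solve the remaining even-permutation, zero-sum configuration. Your write-up is in fact more explicit than the paper's about why conditions $1$--$5$ survive the parity-correcting moves (this is automatic from Theorem~\ref{thrm:first_law_necessary_odd}, since those conditions are invariants of $\mathbf{G}$), and your interleaving of permutation and orientation fixes rather than separating them is an immaterial reordering.
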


\begin{proof}
    We by prove by showing there exists a sequence of moves that brings a configuration $(\sigma, \tau_s, \tau_{c_i}, \rho_{c_i}, \rho_{e_{i,j}}, x, y_i, z)$ satisfying the above conditions to the solved state, $(\text{id}_{S_8}, \text{id}_{S_{12}}, \text{id}_{S_{24}\: i}, \text{id}_{S_{24}\: i}, \text{id}_{S_{24}\: i,j}, 0, 0, 0)$. 
    
    First, we undergo a series of moves that will make all our permutations even. If $\text{sgn}(\sigma) = -1$, we do $F$. Now, we know $\text{sgn}(\sigma) = \text{sgn}(\tau_s) = \text{sgn}(\rho_{c_i}) = +1$. If $\text{sgn}(\tau_i) = -1$, we do $F_i$. Now, we know $\text{sgn}(\rho_{e_{i,j}}) = \text{sgn}(\sigma)\text{sgn}(\tau_i)\text{sgn}(\tau_j) = +1$. We call this move sequence $e$. We let
    \begin{equation*}
        e \cdot (\sigma, \tau_s, \tau_{c_i}, \rho_{c_i}, \rho_{e_{i,j}}, x, y_i, z) = (\sigma', \tau_s', \tau_{c_i}', \rho_{c_i}', \rho_{e_{i,j}}', x', y_i', z')
    \end{equation*}
    Next, we get the permutations to the solved state. By Theorem \ref{thrm:corners_only_subgroup}, there exists a move $p_c \in \mathbf{C}$ such that 
    \begin{equation*}
        p_c \cdot (\sigma', \tau_s', \tau_{c_i}', \rho_{c_i}', \rho_{e_{i,j}}', x', y_i', z') = (\text{id}_{S_8}, \tau_s', \tau_{c_i}', \rho_{c_i}', \rho_{e_{i,j}}', x', y_i', z')
    \end{equation*}
    Similarly, by Theorems \ref{thrm:single_edges_only_subgroup} and \ref{thrm:center_corners_only_subgroup}, there exist moves $p_{e_s} \in \mathbf{E}_S$, and $p_{z_{c_i}} \in \mathbf{Z}_{C_i}$ such that
    \begin{equation*}
        p_{z_{c_i}} \cdot p_{e_s} \cdot (\text{id}_{S_8}, \tau_s', \tau_{c_i}', \rho_{c_i}', \rho_{e_{i,j}}', x', y_i', z') = (\text{id}_{S_8}, \text{id}_{S_{12}}', \tau_{c_i}', \text{id}_{S_{24}\: i}, \rho_{e_{i,j}}', x', y_i', z')
    \end{equation*}
    And, by Theorems \ref{thrm:coupled_edges_only_subgroup} and \ref{thrm:center_edges_only_subgroup}, there exist moves $p_{e_{c_i}} \in \mathbf{E}_{C_i}$, and $p_{z_{e_{i,j}}} \in \mathbf{Z}_{E_{i,j}}$ such that
    \begin{equation*}
        p_{z_{e_i}} \cdot p_{e_{c_i}} \cdot (\text{id}_{S_8}, \text{id}_{S_{12}}', \tau_{c_i}', \text{id}_{S_{24}\: i}, \rho_{e_{i,j}}', x', y_i', z') = (\text{id}_{S_8}, \text{id}_{S_{12}}', \text{id}_{S_{24}\: i}, \text{id}_{S_{24}\: i}, \text{id}_{S_{24}\: i,j}, x', y_i', z')
    \end{equation*}
    
    Then, we can deal with orientation. By condition 5 of Theorem \ref{thrm:first_law_necessary_odd}, $y_i' = 0$. By Theorems \ref{thrm:corners_orient_subgroup} and \ref{thrm:edges_orient_subgroup} and conditions 3 and 4 of \ref{thrm:first_law_necessary_odd}, there exist moves $o_c \in \mathbf{O}_C$ and $o_e \in \mathbf{O}_E$ such that 
    \begin{equation*}
        o_e \cdot o_c \cdot (\text{id}_{S_8}, \text{id}_{S_{12}}', \text{id}_{S_{24}\: i}, \text{id}_{S_{24}\: i}, \text{id}_{S_{24}\: i,j}, x', y_i', z') = (\text{id}_{S_8}, \text{id}_{S_{12}}', \text{id}_{S_{24}\: i}, \text{id}_{S_{24}\: i}, \text{id}_{S_{24}\: i,j}, 0, 0, 0)
    \end{equation*}
    
    Therefore, the move $m = ep_cp_{e_{c_i}}p_{z_{c_i}}p_{e_s}p_{z_{s_i}}o_co_e \in G$ brings any configuration that satisfies the above conditions to the solved state, meaning any configuration that satisfies the conditions is valid because it can be reached from the solved state by move $m^{-1}$ 
\end{proof}

\begin{theorem} \label{thrm:first_law_sufficient_even}
    Any configuration $(\sigma, \tau_{c_i}, \rho_{c_i}, \rho_{e_{i,j}}, x, y_i, z)$ of an even $n \times n \times n$ is valid if:
    \begin{thmlist}
        \item $\text{sgn}(\sigma) = \text{sgn}(\rho_{c_i})$
        \item $\text{sgn}(\rho_{e_{i,j}}) = \text{sgn}(\sigma)\text{sgn}(\tau_i)\text{sgn}(\tau_j)$
        \item $\sum_i x_i = 0$
        \item $y_i = 0$
    \end{thmlist}
\end{theorem}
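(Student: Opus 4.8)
The plan is to replay the proof of Theorem~\ref{thrm:first_law_sufficient_odd}: given a configuration $(\sigma, \tau_{c_i}, \rho_{c_i}, \rho_{e_{i,j}}, x, y_i, z)$ meeting conditions 1--4, I would exhibit an explicit product of legal moves carrying it to the solved state $(\text{id}_{S_8}, \text{id}_{S_{24}\: i}, \text{id}_{S_{24}\: i}, \text{id}_{S_{24}\: i,j}, 0, 0, 0)$; validity then follows, since the configuration lies in the orbit of the solved state under $\mathbf{G}$. Because an even cube has no single edges and no fixed centers, the data $\tau_s, z$ and the subgroups $\mathbf{E}_S, \mathbf{O}_E$ never enter, so the even proof is a strict truncation of the odd one.

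\textbf{Parity normalization.} First apply a short prefix $e$: if $\text{sgn}(\sigma) = -1$, do $F$; then, for each slice $i$ whose current coupled-edge parity $\text{sgn}(\tau_i)$ equals $-1$, do $F_i$. An internal slice rotation $F_i$ is the identity on corners, flips only $\text{sgn}(\tau_i)$ among the coupled-edge parities, and acts evenly on its own center-corner orbit; and conditions 1--2 are invariant under every generator of $\mathbf{M}_n$ by Theorem~\ref{thrm:first_law_necessary_even}. Hence after $e$ we have $\text{sgn}(\sigma) = +1$, so $\text{sgn}(\rho_{c_i}) = +1$ by condition 1, and $\text{sgn}(\tau_i) = +1$ for all $i$, so $\text{sgn}(\rho_{e_{i,j}}) = +1$ for every orbit by condition 2. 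Writing $e \cdot (\sigma, \dots, z) = (\sigma', \tau_{c_i}', \rho_{c_i}', \rho_{e_{i,j}}', x', y_i', z')$, every permutation now appearing is even, while conditions 3--4 persist.

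\textbf{Solving the orbits and then the orientations.} Next peel off the orbits with the commuting subgroups of Subsection~\ref{subsec:gn_subgroups}, each of which acts trivially off its own orbit, so the corrections commute and do not interfere. By Theorem~\ref{thrm:corners_only_subgroup} there is $p_c \in \mathbf{C} \cong \mathcal{A}_8$ killing $\sigma'$; by Theorem~\ref{thrm:center_corners_only_subgroup} there are $p_{z_{c_i}} \in \mathbf{Z}_{C_i} \cong \mathcal{A}_{24}$ killing the $\rho_{c_i}'$; by Theorem~\ref{thrm:coupled_edges_only_subgroup} there are $p_{e_{c_i}} \in \mathbf{E}_{C_i} \cong \mathbf{S}_{24} \supseteq \mathcal{A}_{24}$ killing the $\tau_{c_i}'$; and by Theorem~\ref{thrm:center_edges_only_subgroup} there are $p_{z_{e_{i,j}}} \in \mathbf{Z}_{E_{i,j}} \cong \mathcal{A}_{24}$ killing the $\rho_{e_{i,j}}'$ --- admissible precisely because step $e$ made these permutations even. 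All permutations are now solved; by condition 4 the coupled-edge orientations already satisfy $y_i' = 0$ (coupled-edge orientation is rigid), and by condition 3 together with Theorem~\ref{thrm:corners_orient_subgroup} there is $o_c \in \mathbf{O}_C \cong \mathbb{Z}_3^7$ sending $x'$ to $0$. The product of all these moves lies in $\mathbf{G}$ and takes the configuration to solved, so its inverse reaches the configuration from solved.

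\textbf{The main obstacle.} As in the odd case, the one genuine point is that the center-edge correction is legal: $\mathbf{Z}_{E_{i,j}} \cong \mathcal{A}_{24}$ is only the alternating group, so an odd $\rho_{e_{i,j}}'$ would be unsolvable, and the content of the proof is that the parity prefix $e$, together with the invariance of condition 2, genuinely zeroes out every center-edge parity without creating a fresh obstruction. A secondary check is the bookkeeping in $e$ itself (the $F_i$ corrections must follow $F$, and $F$'s action on center corners must be exactly offset by condition 1 so that no parity is re-broken); both reduce to the invariance statements in Theorem~\ref{thrm:first_law_necessary_even}.
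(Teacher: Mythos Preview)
Your proposal is correct and follows essentially the same approach as the paper: both argue by replaying the proof of Theorem~\ref{thrm:first_law_sufficient_odd}, first applying a parity-fixing prefix $e$ and then peeling off each orbit with the commuting subgroups $\mathbf{C}$, $\mathbf{Z}_{C_i}$, $\mathbf{E}_{C_i}$, $\mathbf{Z}_{E_{i,j}}$, $\mathbf{O}_C$. If anything, your write-up is cleaner than the paper's, which still lists $p_{e_s}$ and $o_e \in \mathbf{O}_E$ in the move sequence even though single edges do not exist on an even cube; you correctly excise those factors.
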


\begin{proof}
    By similar logic to the proof of Theorem \ref{thrm:first_law_sufficient_even}, there exist moves $e$, $p_c \in \mathbf{C}$, $p_{z_{c_i}} \in \mathbf{Z}_{C_i}$, $p_{e_{c_i}} \in \mathbf{E}_{C_i}$, $p_{z_{e_{i,j}}} \in \mathbf{Z}_{E_{i,j}}$, $o_c \in \mathbf{O}_C$, and $o_e \in \mathbf{O}_E$ such that $m = ep_cp_{e_{c_i}}p_{z_{c_i}}p_{e_s}p_{z_{s_i}}o_co_e \in G$ brings any configuration that satisfies the above conditions to the solved state, meaning any such configuration is valid.
\end{proof}

\subsection{The order of $\mathbf{G}$} \label{subsec:g_order}

We that note the action of $\mathbf{G}$ on $\mathcal{S}_{\text{conf}}$ is free, i.e. $\forall s \in \mathcal{S}_{\text{conf}}, g \cdot s = s \Rightarrow g = \text{id}_\mathbf{G}$ and hence all stabilizers $G_s = {g \in \mathbf{G}: g\cdot s = s}$ are clearly trivial. Therefore, there is also a bijection between any orbit $\mathcal{O} = \mathbf{G} \cdot s = {g \cdot s | g \in \mathbf{G}}$ and $\mathbf{G}$. Therefore, by the orbit counting theorem (or the Cauchy-Frobenius-Burside lemma), the number of orbits $|\mathcal{S}_{\text{conf}} \backslash \mathbf{G}|$ is

\begin{equation*}
    |\mathcal{S}_{\text{conf}} \backslash \mathbf{G}| = \frac{1}{|\mathbf{G}|}\sum_{s \in \mathcal{S}_{\text{conf}}}|\mathbf{G}_s| = \frac{|\mathcal{S}_{\text{conf}}|}{|\mathbf{G}|}
\end{equation*}

So, we can find the order of group $\mathbf{G}$ by counting the number of orbits in $\mathcal{S}_{\text{conf}}$. We can count by analysing the formulations of the law of cubology with different initial configurations. 

First, we count the number of orbits for odd cubes. For the first condition, we can assign $+1$ or $-1$ to each of $\text{sgn}(\sigma)$, $\text{sgn}(\tau)_s$, and $sgn{\rho_{c_i}}$, which will only be in the same orbit if all signs are flipped, giving $2^{2+\frac{n-3}{2}}/2=2^{\frac{n-1}{2}}$ choices. For the second condition, we have choices $\text{sgn}(\rho_{e_{i,j}}) = \pm \text{sgn}(\sigma)\text{sgn}(\tau_i)\text{sgn}(\tau_j)$ for each center edge,  giving $2^{\frac{(n-3)^2}{4}}$ choices. For condition 3, we can have $\sum_i x_i = 0$, $1$, or $2$, giving $3$ choices. Similarly, condition 4 gives $2$ choices. We can choose any starting $y_i$, which we recall is a vector containing coupled edge orientations in orbit $i$, giving us $2^{24 \cdot \frac{n-3}{2}}$ choices. So, $|\mathcal{S}_{\text{conf}} \backslash \mathbf{G}| = 2^{\frac{(n-3)^2}{4}+\frac{n+1}{2}} \cdot 3 \cdot 2^{12(n-3)}$.

For even cubes, the first condition gives $2^{\frac{n-2}{2}}$ choices. The second condition gives $2^{\frac{(n-2)(n-4)}{4}}$, the third $3$, and the last $2^{12(n-2)}$. Therefore, $|\mathcal{S}_{\text{conf}} \backslash \mathbf{G}| = 2^{\frac{(n-2)^2}{4}} \cdot 3 \cdot 2^{12(n-2)}$.

\begin{corollary}
    $|\mathbf{G}|_{odd} = 8! \cdot 3^7 \cdot 12! \cdot 2^{11} \cdot (24!)^{\frac{(n-3)(n+1)}{4}} / 2^{(\frac{n-3}{2})^2+(\frac{n-3}{2})+1}$
\end{corollary}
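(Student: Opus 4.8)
The plan is to read off the corollary directly from the orbit‑counting identity established immediately before the statement together with the cardinality formula of Corollary~\ref{crly:s_odd_card}. Since the action of $\mathbf{G}$ on $\mathcal{S}_{\text{conf}}$ is free, every orbit is in bijection with $\mathbf{G}$, so $|\mathcal{S}_{\text{conf}}| = |\mathbf{G}|\cdot|\mathcal{S}_{\text{conf}}\backslash\mathbf{G}|$ and hence $|\mathbf{G}|_{odd} = |\mathcal{S}_{conf_{odd}}| \big/ |\mathcal{S}_{conf_{odd}}\backslash\mathbf{G}|$. Both factors on the right are already known: the numerator is $8!\cdot 3^8\cdot 12!\cdot 2^{12(n-2)}\cdot(24!)^{(n-3)(n+1)/4}$ by Corollary~\ref{crly:s_odd_card}, and the number of orbits for odd cubes is $2^{(n-3)^2/4+(n+1)/2}\cdot 3\cdot 2^{12(n-3)}$, as obtained from the independent choices of the invariants in Theorem~\ref{thrm:first_law_necessary_odd} (whose sufficiency, Theorem~\ref{thrm:first_law_sufficient_odd}, guarantees that these choices parametrize the orbits exactly).

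The execution is then a single division. The power of $3$ drops from $3^8$ to $3^7$; the coupled‑edge factor $(24!)^{(n-3)(n+1)/4}$ is carried along unchanged, since it contributes no factor to the orbit count (the orientations $y_i$ are forced to $0$ by condition~5, so only the $2^{24\cdot(n-3)/2}$ permutation choices of coupled edges appear there, and those cancel against part of $2^{12(n-2)}$); and the remaining powers of $2$ combine as $2^{12(n-2)}\big/\bigl(2^{12(n-3)}\cdot 2^{(n-3)^2/4+(n+1)/2}\bigr) = 2^{12}\big/2^{(n-3)^2/4+(n+1)/2}$.

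The only non‑mechanical point is the cosmetic rewriting of this last exponent into the form stated. Pulling one factor of $2$ out of $2^{12}$ and using the identity $\tfrac{n+1}{2}-1 = \tfrac{n-1}{2} = \tfrac{n-3}{2}+1$ gives $2^{12}\big/2^{(n-3)^2/4+(n+1)/2} = 2^{11}\big/2^{((n-3)/2)^2+((n-3)/2)+1}$, which assembles into exactly the claimed expression $8!\cdot 3^7\cdot 12!\cdot 2^{11}\cdot(24!)^{(n-3)(n+1)/4}\big/2^{((n-3)/2)^2+((n-3)/2)+1}$. I do not expect a genuine obstacle here; the main thing to be careful about is the bookkeeping of the powers of $2$ — in particular keeping straight which part of $2^{12(n-2)}$ in $|\mathcal{S}_{conf_{odd}}|$ is cancelled by the coupled‑edge orientation count $2^{12(n-3)}$ in the orbit count and which part survives as the final $2^{11}$.
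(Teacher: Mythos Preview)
Your argument is correct and is exactly the route the paper takes: the corollary is obtained by dividing $|\mathcal{S}_{conf_{odd}}|$ from Corollary~\ref{crly:s_odd_card} by the orbit count $2^{(n-3)^2/4+(n+1)/2}\cdot 3\cdot 2^{12(n-3)}$, and your algebraic simplification to the stated form is right. One small slip of wording: the $2^{24\cdot(n-3)/2}$ factor in the orbit count comes from the coupled-edge \emph{orientation} choices $y_i$, not ``permutation choices'' --- but this does not affect your computation.
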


\begin{corollary}
    $|\mathbf{G}|_{even} = 8! \cdot 3^7 \cdot (24!)^{\frac{(n)(n-2)}{4}} / 2^{(\frac{n-2}{2})^2}$
\end{corollary}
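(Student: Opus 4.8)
The plan is to compute $|\mathbf{G}|$ directly from the identity $|\mathbf{G}| = |\mathcal{S}_{\text{conf}}| / |\mathcal{S}_{\text{conf}} \backslash \mathbf{G}|$ established above, using the cardinality of $\mathcal{S}_{\text{conf}}$ from Corollary \ref{crly:s_odd_card} and the count of orbits just derived. Everything reduces to bookkeeping of exponents, so I would organize the computation carefully rather than attempt anything clever.

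For the odd case, I would first recall $|\mathcal{S}_{conf_{odd}}| = 8! \cdot 3^8 \cdot 12! \cdot 2^{12(n-2)} \cdot (24!)^{\frac{(n-3)(n+1)}{4}}$ and $|\mathcal{S}_{\text{conf}} \backslash \mathbf{G}| = 2^{\frac{(n-3)^2}{4}+\frac{n+1}{2}} \cdot 3 \cdot 2^{12(n-3)}$. Dividing, the factor $3^8$ becomes $3^7$ (from the single $3$ in the orbit count), the power of $2$ changes from $2^{12(n-2)}$ to $2^{12(n-2)}/2^{12(n-3)+\frac{(n-3)^2}{4}+\frac{n+1}{2}}$; I would simplify $12(n-2) - 12(n-3) = 12$, absorb the $2^{11}$ explicitly, and combine the remaining $\frac{(n-3)^2}{4} + \frac{n+1}{2}$ in the denominator, rewriting $\frac{n+1}{2} = \frac{n-3}{2} + 2$ so that the denominator exponent reads $\left(\frac{n-3}{2}\right)^2 + \left(\frac{n-3}{2}\right) + 1$ after peeling off a $2^1$ to merge with the $2^{11}$. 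The $12!$ and $(24!)^{\frac{(n-3)(n+1)}{4}}$ factors pass through untouched since no corresponding factors appear in the orbit count (the coupled-edge orientation count $2^{12(n-3)} = 2^{24 \cdot \frac{n-3}{2}}$ accounts for the $y_i$ freedom, not the $24!$ permutation factors). This yields exactly the claimed $|\mathbf{G}|_{odd}$.

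For the even case I would proceed identically with $|\mathcal{S}_{conf_{even}}| = 8! \cdot 3^8 \cdot 2^{12(n-2)} \cdot (24!)^{\frac{n(n-2)}{4}}$ and $|\mathcal{S}_{\text{conf}} \backslash \mathbf{G}| = 2^{\frac{(n-2)^2}{4}} \cdot 3 \cdot 2^{12(n-2)}$. Here the entire $2^{12(n-2)}$ in the numerator cancels against the $2^{12(n-2)}$ in the orbit count, leaving $2^{-\frac{(n-2)^2}{4}}$, $3^8/3 = 3^7$, and the untouched $8!$ and $(24!)^{\frac{n(n-2)}{4}}$, giving $|\mathbf{G}|_{even} = 8! \cdot 3^7 \cdot (24!)^{\frac{n(n-2)}{4}} / 2^{\left(\frac{n-2}{2}\right)^2}$ as stated.

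The only genuine subtlety — and the step I expect to need the most care — is making sure the orbit count $|\mathcal{S}_{\text{conf}} \backslash \mathbf{G}|$ is correct, i.e. that the first law of cubology (Theorems \ref{thrm:first_law_necessary_odd}--\ref{thrm:first_law_sufficient_even}) really does carve $\mathcal{S}_{\text{conf}}$ into exactly the stated number of cosets of $\mathbf{G}$; but this was argued in the preceding paragraphs by counting the independent choices of initial configuration data modulo the constraints, and the freeness of the action guarantees each such choice indexes one full orbit. Given that, the corollaries are immediate, and I would present the two computations as short displayed chains of equalities with the exponent arithmetic shown in one or two lines each.
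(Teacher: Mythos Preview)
Your proposal is correct and follows exactly the paper's approach: the corollary is obtained by dividing $|\mathcal{S}_{\text{conf}}|$ (from Corollary~\ref{crly:s_even_card}) by the orbit count $|\mathcal{S}_{\text{conf}}\backslash\mathbf{G}| = 2^{(n-2)^2/4}\cdot 3\cdot 2^{12(n-2)}$ computed in the preceding paragraph, using the freeness of the action to justify $|\mathbf{G}| = |\mathcal{S}_{\text{conf}}|/|\mathcal{S}_{\text{conf}}\backslash\mathbf{G}|$. The paper simply states the result as an immediate corollary without writing out the exponent arithmetic you spell out, so your version is, if anything, more explicit.
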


We can compute some values to compare against established results. For the $2 \times 2 \times 2$, we get $|\mathbf{G}| = 8! \cdot 3^7$, which is equal to $|\mathbf{C}| \cdot |\mathbf{O}_C|$. For the $3 \times 3 \times 3$ Rubik's Cube, $|\mathbf{G}| = 8! \cdot 3^7 \cdot 12! \cdot 2^{11}$, which gives the well known $43 quintillion$ valid configurations. For the $5 \times 5 \times 5$, we get $|\mathbf{G}| = 8! \cdot 3^7 \cdot 12! \cdot 2^8 \cdot (24!)^3$\footnote{Once again, this rectifies the incorrect order for $\mathbf{G}_5$ in  \cite{bonzio_loi_peruzzi_2018}}. For all $2 \leq n \leq 9$, this formula matches the output of a self-made GAP program that generates and finds the order of the symmetry group of the $n \times n \times n$ Rubik's Cube (the program takes too long to be viable for larger $n$).

\section{God's Number} \label{sec:numbers}
We recall that God's Number is defined as the minimum number of face turns needed to solve the puzzle from any position. It has been shown to be 26 for the original Rubik's Cube (we use the quarter turn metric). This section derives a lower bound for God's Number by considering the least number of moves necessary to reach at least the number of physically distinct positions. So, we want to calculate the size of $\mathcal{S}_{\text{phys}}$. 

We let $G$ act on the left on $\mathcal{S}_{\text{phys}}$:

\begin{gather*}
    \mathbf{G} \times \mathcal{S}_{\text{phys}} \longrightarrow \mathcal{S}_{\text{phys}}\\
    (g,s) \longmapsto g \cdot s
\end{gather*}

By the Cauchy-Frobenius-Burnside lemma,

\begin{equation*}
    |\mathcal{S}_{\text{phys}} \backslash \mathbf{G}| = \frac{1}{|\mathbf{G}|}\sum_{s \in \mathcal{S}_{\text{phys}}}|\mathbf{G}_s|
\end{equation*}

Since $\mathcal{S}_{\text{phys}} \subset \mathbf{G}$, $|\mathcal{S}_{\text{phys}} \backslash \mathbf{G}| = 1$, since for every pair $s_1,s_2 \in \mathcal{S}_{\text{phys}}$, there exists $g \in \mathbf{G}$ such that $g \cdot s_1 = s_2$.

\begin{theorem}
    $|\mathbf{G}_s|_{odd} = \left(\frac{24^6}{2}\right)^{\frac{(n-3)(n-1)}{4}}$, $|\mathbf{G}_s|_{even} = \left(\frac{24^6}{2}\right)^{\frac{(n-2)^2}{4}}$
\end{theorem}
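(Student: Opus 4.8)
The plan is to show that $|\mathbf{G}_s|$ is the same for every $s$, to identify this common value with the number of valid configurations that are visually indistinguishable from the solved cube, and then to count those configurations one centre-orbit at a time using the first law. First I would set up the reduction: viewing configurations as permutations of the $6n^2$ stickers, let $P\leqslant\mathbf{S}_{6n^2}$ be the subgroup of colour-preserving sticker permutations (the direct product of the symmetric groups on the colour classes). Two valid configurations $c_1,c_2$ induce the same colouring of the cube exactly when $c_1^{-1}c_2\in P$, so $\mathbf{G}$ acts on $\mathcal{S}_{\text{phys}}$ by left translation on these cosets and the stabiliser of the class of $s$ is $\mathbf{G}_s=\mathbf{G}\cap sPs^{-1}$; conjugating by $s^{-1}$ and using $s^{-1}\mathbf{G}s=\mathbf{G}$ gives $|\mathbf{G}_s|=|\mathbf{G}\cap P|$ for every $s$. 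It then remains to count the $g\in\mathbf{G}$ that are colour-preserving, i.e.\ the valid configurations that look exactly solved.

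Next I would pin down which cubies such a $g$ can move. A corner position displays three colours in a cyclic order fixed by the position; since each corner cubie carries a distinct colour triple and a fixed handedness, only its home corner at orientation $0$ can make that position look solved, so $\sigma=\text{id}$ and $x=0$. The same argument applied to the unique colour pairs of single edges gives $\tau_s=\text{id}$ and $z=0$; and, using that a coupled edge's orientation is determined by its position (Theorem~\ref{thrm:coupled_edges_subgroup_all_perms}) so that the two coupled edges of a shared colour pair cannot be exchanged without altering the picture, we get every $\tau_{c_i}=\text{id}$ and $y_i=0$. A centre cubie has a single colour, and the four centre cubies of a given colour in one orbit are mutually interchangeable, so the only residual freedom is, within each centre-corner orbit and each centre-edge orbit, to permute the cubies of each colour among themselves; that is, $\rho_{c_i}$ and $\rho_{e_{i,j}}$ may be arbitrary elements of the colour-class subgroup $\mathbf{S}_4^6\leqslant\mathbf{S}_{24}$.

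Then I would impose the first law and count. With $\sigma$, $\tau_s$ and all the $\tau_{c_i}$ trivial, condition~1 of Theorems~\ref{thrm:first_law_necessary_odd}/\ref{thrm:first_law_necessary_even} forces $\text{sgn}(\rho_{c_i})=+1$ and condition~2 forces $\text{sgn}(\rho_{e_{i,j}})=+1$, while the orientation conditions hold automatically; conversely, by Theorems~\ref{thrm:first_law_sufficient_odd}/\ref{thrm:first_law_sufficient_even} any assignment of even elements of $\mathbf{S}_4^6$ to all centre orbits (and the identity elsewhere) is valid and visibly solved. Hence $\mathbf{G}\cap P$ is the direct product, over the centre orbits, of copies of $\mathbf{S}_4^6\cap\mathcal{A}_{24}$, a group of order $24^6/2$ (exactly half of $\mathbf{S}_4^6$ has even sign). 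By Theorems~\ref{thrm:sconf_odd} and~\ref{thrm:sconf_even} there are $\tfrac{n-3}{2}$ centre-corner and $\tfrac{(n-3)^2}{4}$ centre-edge orbits when $n$ is odd, totalling $\tfrac{n-3}{2}+\tfrac{(n-3)^2}{4}=\tfrac{(n-3)(n-1)}{4}$, and $\tfrac{n-2}{2}+\tfrac{(n-2)(n-4)}{4}=\tfrac{(n-2)^2}{4}$ centre orbits when $n$ is even, which yields the two claimed formulas for $|\mathbf{G}_s|$.

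The step I expect to be the main obstacle is the coupled-edge claim above: the two coupled edges sharing a colour pair present exactly the same stickers in a given position, so excluding their interchange really has to be argued from the fixed-orientation/handedness structure of Theorem~\ref{thrm:coupled_edges_subgroup_all_perms} (or else the notion of "physically distinct" must be read so that such an interchange does not count). Once that is settled, the reduction in the first step and the parity bookkeeping and orbit count in the last are routine consequences of results already established.
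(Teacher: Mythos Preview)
Your proposal is correct and follows essentially the same approach as the paper: reduce to counting valid, visually-solved configurations, argue that only centre cubies can move, observe that each centre orbit contributes an $\mathbf{S}_4^6$ worth of colour-preserving permutations cut down to $24^6/2$ by the parity constraints of the first law, and multiply over the orbits. Your version is in fact more careful than the paper's---you justify that $|\mathbf{G}_s|$ is independent of $s$ via the conjugation argument and you flag the coupled-edge subtlety that the paper dismisses with ``clearly''---but the underlying strategy is the same.
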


$\mathbf{G}_s$ is the stabilizer of $s$ in $G$, so to count $|\mathbf{G}_s|$, we must determine how many elements $g \in G$ do not change the physical appearance of the cube, thus all mapping to the same physically distinct configuration. Clearly, any permutation or orientation change of corners or edges produces a physically distinct configuration, so we only count valid permutations of centers that appear the same.

For any orbit of centers, permutations of the $4$ centers on each face result in physically identical valid configurations. We also recall that by conditions 1 and 2 of Theorems \ref{thrm:first_law_necessary_odd} and \ref{thrm:first_law_necessary_even}, a permutation within an orbit of center corners or center edges that does not permute corners or edges must be even. Therefore, we get $4!^6/2$ such permutations for each orbit. The result clearly follows from the number of orbits of centers for odd and even cubes.\\\\

\begin{corollary}
    $|\mathcal{S}_{\text{phys}}|_{odd} = 8! \cdot 3^7 \cdot 12! \cdot 2^{10} \cdot (24!)^{\frac{(n-3)(n+1)}{4}} / (24^6)^{\frac{(n-3)(n-1)}{4}}$
\end{corollary}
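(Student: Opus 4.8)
The plan is to combine the orbit-counting identity already set up in this section with the formulas for $|\mathbf{G}|_{odd}$ and $|\mathbf{G}_s|_{odd}$ established in the preceding corollaries and theorem. First I would note that, because $\mathcal{S}_{\text{phys}} \subset \mathbf{G}$, the left action of $\mathbf{G}$ on $\mathcal{S}_{\text{phys}}$ is transitive, so all stabilizers $\mathbf{G}_s$ are mutually conjugate and in particular have the common cardinality $\left(\frac{24^6}{2}\right)^{\frac{(n-3)(n-1)}{4}}$ just computed. Substituting this and $|\mathcal{S}_{\text{phys}} \backslash \mathbf{G}| = 1$ into the Cauchy-Frobenius-Burnside equation $|\mathcal{S}_{\text{phys}} \backslash \mathbf{G}| = \frac{1}{|\mathbf{G}|}\sum_{s \in \mathcal{S}_{\text{phys}}}|\mathbf{G}_s|$ collapses the sum to $1 = \frac{|\mathcal{S}_{\text{phys}}|\cdot|\mathbf{G}_s|}{|\mathbf{G}|}$, hence $|\mathcal{S}_{\text{phys}}|_{odd} = |\mathbf{G}|_{odd}/|\mathbf{G}_s|_{odd}$.

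Next I would plug in the explicit expressions and simplify; the only bookkeeping is in the powers of $2$ and of $24$. Setting $k = \frac{n-3}{2}$, the exponent $\frac{(n-3)(n-1)}{4}$ becomes $k(k+1)$, so $|\mathbf{G}_s|_{odd} = (24^6)^{k(k+1)}/2^{k(k+1)}$, while the denominator $2^{(\frac{n-3}{2})^2 + (\frac{n-3}{2}) + 1}$ appearing in $|\mathbf{G}|_{odd}$ is precisely $2^{k(k+1)+1}$. Dividing, the $2^{k(k+1)}$ from the stabilizer cancels one factor of $2$ in $2^{k(k+1)+1}$, turning $2^{11}$ into $2^{10}$, and the $(24^6)^{k(k+1)}$ moves into the denominator as $(24^6)^{\frac{(n-3)(n-1)}{4}}$; the factor $(24!)^{\frac{(n-3)(n+1)}{4}}$ and the constants $8! \cdot 3^7 \cdot 12!$ carry through unchanged, giving the stated formula.

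I do not anticipate a genuine obstacle: the substance is contained in the earlier results, and the one point warranting an explicit sentence is the independence of $|\mathbf{G}_s|$ from $s$ (which is what lets $\sum_s |\mathbf{G}_s|$ become $|\mathcal{S}_{\text{phys}}|\cdot|\mathbf{G}_s|$). This follows from transitivity of the action, or directly from the description of $\mathbf{G}_s$ as the group of face-wise permutations within each center orbit, a description manifestly independent of the configuration at which one sits. Everything else is the exponent arithmetic outlined above, which I would present in a line or two.
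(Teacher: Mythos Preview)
Your proposal is correct and matches the paper's intended argument: the corollary is stated without proof because it is meant to follow immediately from the orbit-stabilizer setup and the preceding computation of $|\mathbf{G}_s|_{odd}$, exactly via the division $|\mathcal{S}_{\text{phys}}|_{odd} = |\mathbf{G}|_{odd}/|\mathbf{G}_s|_{odd}$ you describe. Your explicit bookkeeping with $k=\frac{n-3}{2}$ is a clean way to verify the exponent of $2$ drops from $11$ to $10$, and the remark on why $|\mathbf{G}_s|$ is independent of $s$ is a welcome clarification the paper leaves implicit.
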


\begin{corollary}
    $|\mathcal{S}_{\text{phys}}|_{even} = 8! \cdot 3^7 \cdot (24!)^{\frac{(n)(n-2)}{4}} / (24^6)^{\frac{(n-2)^2}{4}}$
\end{corollary}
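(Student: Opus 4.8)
The plan is to obtain both corollaries as immediate consequences of the preceding Theorem on $|\mathbf{G}_s|$ together with the orbit machinery already in place. Since $\mathcal{S}_{\text{phys}} \subseteq \mathbf{G}$ and $\mathbf{G}$ acts transitively on $\mathcal{S}_{\text{phys}}$, we have $|\mathcal{S}_{\text{phys}} \backslash \mathbf{G}| = 1$, as already observed; transitivity also makes all point stabilizers conjugate in $\mathbf{G}$, hence of one common order $|\mathbf{G}_s|$, which is precisely the quantity evaluated in that Theorem. Feeding both facts into the Cauchy--Frobenius--Burnside identity,
\begin{equation*}
    1 = |\mathcal{S}_{\text{phys}} \backslash \mathbf{G}| = \frac{1}{|\mathbf{G}|}\sum_{s \in \mathcal{S}_{\text{phys}}}|\mathbf{G}_s| = \frac{|\mathcal{S}_{\text{phys}}|\,|\mathbf{G}_s|}{|\mathbf{G}|},
\end{equation*}
and rearranging yields the clean formula $|\mathcal{S}_{\text{phys}}| = |\mathbf{G}|/|\mathbf{G}_s|$; everything after this is substitution and simplification.

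For odd $n$ I would substitute $|\mathbf{G}|_{odd}$ from the corresponding Corollary and $|\mathbf{G}_s|_{odd} = (24^6/2)^{(n-3)(n-1)/4}$ from the Theorem. The factors $8!,\,3^7,\,12!$ and the power of $24!$ carry over verbatim; the $24^6$ in the denominator of $|\mathbf{G}_s|$ reappears raised to $(n-3)(n-1)/4$; and the leftover powers of $2$ --- namely $2^{11}$ and $2^{-[(\frac{n-3}{2})^2 + \frac{n-3}{2} + 1]}$ from $|\mathbf{G}|_{odd}$, together with the $2^{+(n-3)(n-1)/4}$ contributed by the $\tfrac12$ inside $(24^6/2)$ --- are collected using the elementary identity
\begin{equation*}
    \left(\tfrac{n-3}{2}\right)^{2} + \tfrac{n-3}{2} + 1 = \tfrac{(n-3)(n-1)}{4} + 1,
\end{equation*}
which makes the two $(n-3)(n-1)/4$ exponents cancel and leaves $2^{11-1} = 2^{10}$, i.e. the stated $|\mathcal{S}_{\text{phys}}|_{odd}$. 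For even $n$ the same computation uses $|\mathbf{G}|_{even}$ and $|\mathbf{G}_s|_{even} = (24^6/2)^{(n-2)^2/4}$; here $\left(\tfrac{n-2}{2}\right)^2 = (n-2)^2/4$, so the $2^{+(n-2)^2/4}$ coming from $1/|\mathbf{G}_s|$ cancels the divisor $2^{(n-2)^2/4}$ in $|\mathbf{G}|_{even}$ exactly, leaving no residual power of $2$ and yielding $|\mathcal{S}_{\text{phys}}|_{even}$ as claimed.

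The only delicate point --- the ``main obstacle,'' such as it is --- is this exponent bookkeeping: one must remember that $|\mathbf{G}|$ already carries the overcounting divisor from $|\mathcal{S}_{\text{conf}}\backslash\mathbf{G}|$, and that passing to $|\mathbf{G}|/|\mathbf{G}_s|$ simultaneously introduces a $24^6$-per-center-orbit factor into the denominator and returns a compensating $2$-per-center-orbit factor, so the net powers of $2$ telescope to the tidy constants $2^{10}$ (odd) and $1$ (even). As a sanity check, at $n=3$ both formulas collapse to $8!\cdot 3^7\cdot 12!\cdot 2^{10}$ --- the classical count for the original cube, consistent with $\mathcal{S}_{\text{phys}} = \mathbf{G}$ there since $3\times 3\times 3$ centers are single and fixed --- and at $n=2$ one gets $8!\cdot 3^7 = |\mathbf{G}|$ for the Pocket Cube, again because it has no movable centers.
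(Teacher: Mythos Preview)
Your argument is correct and is exactly the route the paper intends: the corollary is meant to follow immediately from the orbit-stabilizer/Burnside setup preceding the Theorem, via $|\mathcal{S}_{\text{phys}}| = |\mathbf{G}|/|\mathbf{G}_s|$, followed by substitution of the known values of $|\mathbf{G}|_{even}$ and $|\mathbf{G}_s|_{even}$. Your exponent bookkeeping and sanity checks are a welcome addition, since the paper leaves the arithmetic implicit.
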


\begin{theorem}
    God's Number grows as $\Omega(n^2/\log n)$
\end{theorem}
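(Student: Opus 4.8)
The plan is to use a counting/pigeonhole argument: the number of distinct configurations reachable in at most $\ell$ face turns is bounded above by $(6(n-1))^\ell$ (or a similar polynomial-in-$n$ base), since from any configuration there are at most $6(n-1)$ choices of quarter-turn move (one for each of the $6$ faces and each of the $n-1$ slice depths, in the quarter-turn metric; the exact constant is immaterial for the asymptotics). Hence if $N$ denotes God's Number, then every element of $\mathcal{S}_{\text{phys}}$ must be reachable within $N$ moves, so we need $(cn)^N \geq |\mathcal{S}_{\text{phys}}|$ for an appropriate constant $c$, giving $N \geq \log|\mathcal{S}_{\text{phys}}| / \log(cn)$.

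First I would write down the lower bound for $\log |\mathcal{S}_{\text{phys}}|$ using the corollaries just established. For odd $n$, $|\mathcal{S}_{\text{phys}}|_{odd} = 8!\cdot 3^7\cdot 12!\cdot 2^{10}\cdot (24!)^{(n-3)(n+1)/4} / (24^6)^{(n-3)(n-1)/4}$; the dominant term is the ratio $(24!)^{\Theta(n^2)}/(24^6)^{\Theta(n^2)} = \left(24!/24^6\right)^{\Theta(n^2)}$. The key observation is that $24! / 24^6 > 1$ (indeed $24!$ is vastly larger than $24^6$), so $\log|\mathcal{S}_{\text{phys}}| = \Theta(n^2)$ — specifically $\log|\mathcal{S}_{\text{phys}}| \geq c' n^2$ for some explicit constant $c' > 0$ and all sufficiently large $n$. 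The even case is analogous with exponent $(n-2)^2/4$ on the ratio.

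Then I would combine the two bounds: from $(cn)^N \geq |\mathcal{S}_{\text{phys}}| \geq 2^{c'n^2}$ we get $N \log(cn) \geq c' n^2$, i.e. $N \geq \frac{c' n^2}{\log(cn)} = \Omega\!\left(\frac{n^2}{\log n}\right)$. I would note explicitly that the same argument applies to both parities since both have $\log|\mathcal{S}_{\text{phys}}| = \Theta(n^2)$.

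The main obstacle, though more of a bookkeeping point than a genuine difficulty, is justifying the branching bound $6(n-1)$ on the number of moves available at each step and confirming it is polynomial in $n$ (so that $\log$ of it is $O(\log n)$) — one must be careful that "face turns" in the quarter-turn metric for the $n\times n\times n$ cube includes all internal slice turns, of which there are $\Theta(n)$ many, not a constant number; if one instead only counted the $6$ outer face turns the reachable set would be too small and the argument would fail. A secondary point worth a sentence is verifying $24!/24^6 > 1$ numerically (it is, by many orders of magnitude) so the exponent $\Theta(n^2)$ genuinely contributes a positive quantity. Everything else is routine asymptotic manipulation, and the bound matches the growth rate in \cite{demaine_demaine_eisenstat_lubiw_winslow_2011}, with the constants here being improvable by sharpening the branching bound.
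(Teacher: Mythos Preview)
Your proposal is correct and follows essentially the same counting/pigeonhole argument as the paper: bound the number of reachable configurations after $k$ moves by $(cn)^k$ for some constant $c$, compare against $|\mathcal{S}_{\text{phys}}|$ whose logarithm is $\Theta(n^2)$, and solve for $k$. The paper uses $6n$ for the branching factor (from $3n$ slices times two quarter-turn directions) and only writes out the even case explicitly, but these are cosmetic differences; your extra remarks about verifying $24!/24^6 > 1$ and handling both parities are, if anything, slightly more careful.
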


\begin{proof}
    We have $3n$ slices (counting center slices for odd cubes) for an $n \times n \times n$ Rubik's Cube, which means we have $6n$ basic moves. After $k$ moves, the cube can achieve at most $(6n)^k$ configurations. By the pigeonhole principle, if any valid configuration can be solved in $k$ basic moves, the $\sum (6n)^k$ configurations reached in $k$ moves must be more than the number of distinct valid configurations. Therefore, for even cubes,
    
    \begin{gather*}
        8! \cdot 3^7 \cdot (24!)^{\frac{(n)(n-2)}{4}} / (24^6)^{\frac{(n-2)^2}{4}} \leq \sum (6n)^k \leq (6n)^{k+1}\\
        (k+1) \log 6n \geq \frac{(n)(n-2)}{4} \log 24! - \frac{(n-2)^2}{4} \log 24^6\\
        k \geq \frac{\frac{(n)(n-2)}{4} \log 24! - \frac{(n-2)^2}{4} \log 24^6}{\log 6n}
    \end{gather*}
\end{proof}

We note that this argument can be tuned by finding the number of sequences of $k$ moves that give distinct configurations (for example, omitting all instances of $mm^{-1}$ or $m^3$) to give a better coefficient for the lower bound.

\section{Conclusion} \label{sec:conclusion}

We have modified and proven the law of cubology, which gives a necessary and sufficient solvability criterion for the $n \times n \times n$ Rubik's Cube. We derived the order of the Rubik's Cube group and number of distinct valid configurations for any size cube. We were able to determine an $\Omega(n^2/\log n)$ lower bound for God's Number.

These results can be applied to cryptography and physics by quantifying the number of states when the randomized Rubik's Cube is used as a physical model for chaos.

However, there are still many more pertinent questions in this field. The following are a few suggestions:

In addition to God's Number, The Devil's Number examines the shortest length of an algorithm that passes every possible state. An analysis of the diameter of the Rubik's Cube group could provide a lower bound for this less widely studied quantity.

Randomization of a Rubik's Cube is a fascinating topic. God's Number could be applied to determine the optimal number of turns to make to scramble a cube.

The Rubik's Cube can be further generalized to other shapes, and even more dimensions. A group theoretical approach can give the number of configurations of such constructions.

\clearpage
\nocite{GAP4}
\nocite{MIT_2010}
\bibliographystyle{abbrv}
\bibliography{citations}

\end{document}